\author{Xavier Lamy}
\title{Uniaxial symmetry in nematic liquid crystals}
\newtheorem{thm}{Theorem}[section]
\newtheorem{prop}[thm]{Proposition}
\newtheorem{lem}[thm]{Lemma}
\theoremstyle{definition}
\newtheorem{rem}[thm]{Remark}
\def\R{\mathbb{R}}
\begin{document}
\maketitle

\begin{abstract}
Within the Landau-de Gennes theory of liquid crystals, we study theoretically the equilibrium configurations with uniaxial symmetry. We show that the uniaxial symmetry constraint is very restrictive and can in general not be satisfied, except in very symmetric situations. For one- and two-dimensional configurations, we characterize completely the uniaxial equilibria: they must have constant director. In the three dimensional case we focus on the model problem of a spherical droplet with radial anchoring, and show that any uniaxial equilibrium must be spherically symmetric.
It was known before that uniaxiality can \textit{sometimes} be broken by energy minimizers. Our results shed a new light on this phenomenon: we prove here that in one or two dimensions uniaxial symmetry is \textit{always} broken, unless the director is constant. Moreover, our results concern all equilibrium configurations, and not merely energy minimizers.
\end{abstract}

\section{Introduction}

Nematic liquid crystals are composed of rigid rod-like molecules which tend to align in a common preferred direction. For a macroscopic description of such orientational ordering, several continuum theories are available, relying on different order parameters.

The state of alignment can be simply characterized by a director field $n$ with values in the unit sphere $\mathbb S^2$, corresponding to the local preferred direction of orientation. Within such a description, topological constraints may force the appearance of defects: regions where the director field is not continuous. 
To obtain a finer understanding of such regions, one needs to introduce a scalar order parameter $s$, corresponding to the degree of alignment along the director $n$. 
However, the $(s,n)$ description only accounts for uniaxial nematics, which correspond to a symmetric case of the more general biaxial nematic phase. To describe biaxial regions, a tensorial order parameter $Q$ is needed. 
Biaxiality has been used to theoretically describe defect cores 
\cite{mkaddemgartland00, kraljvirgazumer99, penzenstadlertrebin89, sonnetkilianhess95,lucarey07a,lucarey07b} and material frustration \cite{palffymuhoraygartlandkelly94, bisigartlandrossovirga03,ambrozicbisivirga08}, and has been observed experimentally \cite{madsendingemansnakatasamulski04, acharyaprimakkumar04}.

The $n$ and $(s,n)$ descriptions can both be interpreted within the $Q$-tensor description. The tensorial order parameter $Q$ describes different degrees of symmetry: isotropic, uniaxial or biaxial. The isotropic case $Q=0$ corresponds to the full symmetry group $G=SO(3)$. The uniaxial case corresponds to a broken symmetry group $H\approx O(2)$. And the biaxial case corresponds to a further broken symmetry group with 4 elements. The $(s,n)$ description amounts to restricting the order parameter space to uniaxial or isotropic $Q$-tensor: only  $Q$-tensors which are `at least $O(2)$-symmetric' are considered. The $n$ description arises in the London limit, since the space of degeneracy is $G/H\approx\mathbb S^2/\lbrace \pm 1\rbrace$ (see for instance \cite[Section~2]{trebin82} for more details).

 In  physical systems presenting some symmetry, existence of symmetric equilibrium configurations is a common phenomenon: such configurations can be obtained by looking for a solution with a special symmetrical \textit{ansatz}. In some cases this phenomenon can be formalized mathematically as a Principle of Symmetric Criticality \cite{palais79}.  
In the present paper we investigate whether the same principle applies to uniaxial symmetry in nematic liquid crystals: do there exist uniaxial $Q$-tensor equilibrium configurations? or is the uniaxial symmetry always broken?

We consider a Landau-de Gennes free energy. We do not work with the usual four-terms expansion of the bulk free energy but with a general frame invariant bulk free energy.

We start by considering the case of one- or two-dimensional configurations: that is, configurations exhibiting translational invariance in at least one direction of space \cite{palffymuhoraygartlandkelly94, bisigartlandrossovirga03,ambrozicbisivirga08,kraljvirgazumer99, sonnetkilianhess95}. In Theorem~\ref{thm2D} we describe completely the one- or two-dimensional uniaxial equilibrium configurations: these are essentially only the configurations with constant director field $n$. In particular, \textit{even if the boundary conditions enhance uniaxial symmetry, the uniaxial order is destroyed in the whole system}, unless the director field is uniform.

The three dimensional case is more complex. While in one and two dimensions the uniaxial configurations are essentially trivial, there does exist a non trivial uniaxial configuration in three dimensions: namely, the so-called radial hedgehog \cite{mkaddemgartland00,schopohlsluckin88}, which corresponds to a spherically symmetric configuration in a sperical droplet of nematic, with strong radial anchoring on the surface. In Theorem~\ref{thmrad} we show that any uniaxial equilibrium configuration must be spherically symmetric, in this particular nematic system. Such a result constitutes a first step towards a complete characterization of three-dimensional uniaxial equilibrium configurations. We expect the radial hedgehog to be the only non trivial uniaxial equilibrium.

Our main results, Theorem~\ref{thm2D} and Theorem~\ref{thmrad}, bring out the idea that the constraint of uniaxial symmetry is very restrictive and is in general not satisfied, except in very symmetric situations. These results
 shed a very new light on the phenomenon of `biaxial escape' \cite{sonnetkilianhess95}, and are \textit{fundamentally different} from the previous related ones in the literature. Indeed, biaxiality was always shown to occur by means of free energy comparison methods, while we \textit{only rely on the equilibrium equations}. In particular our results hold for all metastable configurations. Moreover, the appearance of biaxiality was usually related to special values of parameters such as the temperature \cite{mkaddemgartland00} -- which affects the bulk equilibrium --, or the size of the system \cite{bisigartlandrossovirga03} -- which affects the director deformation. We show instead that biaxiality occurs for \textit{any value of the temperature} (since the bulk energy density we work with is arbitrary) and \textit{any kind of director deformation.} In short: escape to biaxiality appears in all possible situations, and the equilibrium equations themselves force this escape.

The plan of the paper is the following. In Section~\ref{smod} we introduce the mathematical model describing orientational order. In Section~\ref{suniax} we derive the equilibrium equations for a configuration with uniaxial symmetry, and discuss the appearance of an extra equation corresponding to equilibrium with respect to symmetry-breaking perturbations. Sections \ref{s2D} and \ref{srad} contain the main results of the paper: in Section~\ref{s2D} we deal with one- and two-dimensional configurations and prove Theorem~\ref{thm2D}, and in Section~\ref{srad} we focus on a spherical nematic droplet with radial anchoring and prove Theorem~\ref{thmrad}.

\section{Description of the model}\label{smod}

\subsection{Order parameter and degrees of symmetry}\label{smodsorder}

In a nematic liquid crystal, the local state of alignment is described by an order parameter taking values in
\begin{equation}\label{S}
\mathcal S = \left\lbrace Q\in M_3(\mathbb R);\: Q={}^t\! Q,\: \mathrm{tr}\: Q =0 \right\rbrace,
\end{equation}
the set of all symmetric traceless $3\times 3$ matrices.

The group $G=SO(3)$ acts on the order parameter space $\mathcal S$: we denote by $\mathrm{Isom}(\mathcal S)$ the group of linear isometries of  $\mathcal S$, and the action is given by the group morphism
\begin{equation*}
\rho\colon G \to \mathrm{Isom}(\mathcal S),\quad \rho(g)Q=g Q {}^t\! g.
\end{equation*}
Note that this action $\rho$ is related to the natural action of $G$ on $\R^3$: $\rho(g) Q$ is the order parameter one should observe after changing the coordinate frame by $g$ in $\R^3$.

In the order parameter space $\mathcal S$ we may distinguish three types of elements, depending on their degree of symmetry. The degree of symmetry of an element $Q\in\mathcal S$ is given by its isotropy subgroup
\begin{equation*}
H(Q):= \left\lbrace g\in G,\:\rho(g) Q = Q \right\rbrace,
\end{equation*}
which can be of three different kinds:
\begin{itemize}
\item If $Q=0$, then $H(Q)=G$, and $Q$ describes the \emph{isotropic phase}.
\item If $Q$ has two equal (non zero) eigenvalues, then
\begin{equation*}
Q=\lambda\left( n\otimes n-\frac 13 \mathbf I \right),\quad\lambda\in\R^\ast,\: n\in\mathbb S^2,
\end{equation*}
and thus $Q=\lambda \rho(g) A_0$, where $A_0=\mathbf {e_z} \otimes \mathbf {e_z} -\mathbf I /3$ and $g\in G$ maps $\mathbf {e_z}$ to $n$. Therefore $H(Q)$ is conjugate via $g$ to 
\begin{equation*}
D_\infty := H(A_0) = \left\langle \lbrace r_{\mathbf {e_z},\theta }\rbrace_{\theta\in\R} , r_{\mathbf e_y,\pi} \right\rangle \approx O(2),
\end{equation*}
where $r_{n,\theta}$ stands for the element of $G$ corresponding to the rotation of axis $n$ and angle $\theta$. In this case, $Q$ describes the \emph{uniaxial phase}.
\item If $Q$ has three distinct eigenvalues, and $g\in G$ maps the canonical orthonormal basis $(\mathbf {e_x},\mathbf {e_y},\mathbf {e_z} )$ to an orthonormal basis of eigenvectors of $Q$, then $H(Q)$ is conjugate via $g$ to
\begin{equation*}
D_2 = \left\langle r_{\mathbf {e_x} ,\pi} , r_{\mathbf {e_y},\pi} \right\rangle \approx \mathbb Z / 2 \mathbb Z \times \mathbb Z / 2\mathbb Z.
\end{equation*}
In this case, $Q$ describes the \emph{biaxial phase}.
\end{itemize}

Hence there is a hierarchy in the breaking of symmetry that $Q$ can describe:
\begin{equation*}
\lbrace 0\rbrace \subset \mathcal U \subset \mathcal S,
\end{equation*}
where
\begin{equation}\label{Uset}
\mathcal U = \left\lbrace s \left( n\otimes n - \frac 13 \mathbf I \right);\: s\in\mathbb R,\: n\in \mathbb S^2 \right\rbrace,
\end{equation}
is the set of order parameter which can describe a breaking of symmetry from $G$ to $D_\infty$. Elements of $\mathcal U$ are characterized by their \emph{director} $n\in\mathbb S^2$ and their \emph{scalar order parameter} $s\in\mathbb R$.

\begin{rem}\label{remdetsn} Note that the scalar order parameter $s$ of a uniaxial tensor $Q\in\mathcal U$ is uniquely determined since $s=0$ if $Q=0$, and
\begin{equation*}
s=3\frac{\mathrm{tr}(Q^3)}{|Q|^2}
\end{equation*}
otherwise. On the other hand, the director is uniquely determined up to a sign if $Q\neq 0$, and not determined at all if $Q=0$.
\end{rem}

\subsection{Equilibrium configurations}\label{smodseq}

We consider a nematic liquid crystal contained in an open set $\Omega\subset\mathbb R^3$. The state of alignment of the material is described by a map
\begin{equation*}
Q \colon \Omega \to \mathcal S.
\end{equation*}
At equilibrium, the configuration should minimize a free energy functional of the form
\begin{equation*}
\mathcal F( Q) = \int_\Omega \left( f_{el} + f_b \right) dx, 
\end{equation*}
where $f_{el}$ is an elastic energy density, and $f_b$ is the bulk free energy.

Here we consider the one constant approximation for the elastic term:
\begin{equation*}
f_{el} = \frac{L}{2}|\nabla Q|^2,
\end{equation*}
and the most general frame invariant (i.e. invariant under the action $\rho$) bulk term:
\begin{equation*}
f_b =\varphi (\mathrm{tr}(Q^2),\mathrm{tr}(Q^3) ),
\end{equation*}
for some function
\begin{equation*}
\varphi \colon \R\times\R \to \R_+,
\end{equation*}
which we assume to be smooth.

\begin{rem}\label{remframeinv}
A fundamental property of the free energy density $f(Q)=f_{el}+f_b$ is its \emph{frame invariance}: for any $Q\in H^1_{loc}(\R^3;\mathcal S)$ it holds
\begin{equation*}
f(g\cdot Q)(x) = f(Q)(g^{-1}x)\qquad \forall g\in G,
\end{equation*}
where $g\cdot Q$ denotes the natural action of $G$ on maps $Q$, given by
\begin{equation}\label{framechange}
(g \cdot Q)(x)=\rho(g) Q(g^{-1}x)= g Q(g^{-1}x) g^{-1}.
\end{equation}
More general elastic terms $f_{el}$ are physically relevant, as long as the frame invariance property is conserved.
\end{rem}

An equilibrium configuration is described by a map $Q\in H^1(\Omega;\mathcal S)$ satisfying the Euler-Lagrange equation
\begin{equation}\label{EL}
L\Delta Q = 2(\partial_1\varphi) Q + 3 (\partial_2\varphi) \left(Q^2-\frac{|Q|^2}{3}I\right),
\end{equation}
associated to the free energy $\mathcal F$.

Physically relevant configurations should be bounded. On the other hand, classical elliptic regularity arguments ensure that any solution of \eqref{EL} which lies in $H^1\cap L^\infty$ is smooth. In fact, if in addition $\varphi$ is analytic, any $H^1\cap L^\infty$ solution of \eqref{EL} is actually analytic \cite[Theorem~6.7.6]{morrey}. 

In the sequel we will always consider smooth solutions. We discuss next a very mild sufficient condition on $\varphi$ which ensures boundedness -- and therefore smoothness -- of solutions.

In a bounded regular domain $\Omega$, a natural assumption on $\varphi$ which ensures that any $H^1$ solution of \eqref{EL} with bounded boundary data is in fact bounded is the following one:
\begin{equation}\label{growthvarphi}
\exists M>0 \text{ such that } \left( |Q|\geq M \Longrightarrow 2 |Q|^2(\partial_1\varphi) + 3(\partial_2\varphi)\mathrm{tr}(Q^3) \geq 0 \right).
\end{equation}
See \cite[Lemma~B.3]{lamy14} for a proof that assumption \eqref{growthvarphi} on $\varphi$ implies indeed that any $Q\in H^1$ solution of \eqref{EL} satisfies
\begin{equation*}
\|Q\|_{L^\infty(\Omega)} \leq \max (M,\| Q \|_{L^\infty(\partial\Omega)} ).
\end{equation*}

The fourth order approximation for $f_b$ usually considered in the literature
\begin{equation}\label{fblit}
f_b(Q)=-a\mathrm{tr}(Q^2)-b\mathrm{tr}(Q^3)+c\mathrm{tr}(Q^2)^2,
\end{equation}
corresponds to
\begin{equation*}
\varphi(x,y)=-ax -by + c x^2,
\end{equation*}
which satisfies indeed \eqref{growthvarphi}, as long as $c>0$ (and is obviously analytic).

\section{Uniaxial equilibrium}\label{suniax}

In the sequel, we investigate the existence of purely uniaxial equilibrium configurations, i.e. solutions $Q$ of the equilibrium equations \eqref{EL}, which satisfy
\begin{equation*}
Q (x) \in \mathcal U \qquad \forall x\in\Omega.
\end{equation*}
In other words, a purely uniaxial equilibrium configuration is a solution of \eqref{EL} which can be written in the form
\begin{equation}\label{ansatz}
Q(x)=s(x) \left( n(x)\otimes n(x) - \frac 13 \mathbf I \right),
\end{equation}
for some scalar field $s\colon\Omega\to\R$ and unit vector field $n\colon\Omega\to\mathbb S^ 2$.

\begin{rem}\label{remsnsmooth}
Here we do not require \textit{a priori} that the scalar field $s$ and the unit vector field $n$ in \textit{ansatz} \eqref{ansatz} be smooth. Note that $s$ is uniquely determined (see Remark~\ref{remdetsn}) by 
\begin{equation*}
s(x) = 3\frac{\mathrm{tr}(Q(x)^3)}{|Q(x)|^2}.
\end{equation*}
Therefore if $Q$ is smooth, then $s$ is smooth in the set $\lbrace Q\neq 0\rbrace \subset\Omega$ of points where $Q$ does not vanish, and continuous in $\Omega$. On the other hand, $n$ is not uniquely determined (see Remark~\ref{remdetsn}). However, in $\lbrace Q \neq 0 \rbrace$ one can choose locally a smooth unit vector field $n$. More precisely, if $Q$ is smooth and $x_0\in\Omega$ is such that $Q(x_0)\neq 0$, then there exists an open ball $B\subset\Omega$ centered at $x_0$, and a smooth map $n\colon B \to \mathbb S^2$ such that \eqref{ansatz} holds. The local smooth $n$ is obtained through the implicit function theorem (see the proof of Theorem~\ref{thm2D} below for more details). 
\end{rem}

\begin{rem}
Uniaxiality can be characterized through
\begin{equation*}
Q\in\mathcal U \Longleftrightarrow |Q|^6=6\left[\mathrm{tr}(Q^3)\right]^2,
\end{equation*}
so that any analytic map $Q\colon \Omega\to\mathcal S$ which is uniaxial in some
open subset of $\Omega$ is automatically uniaxial everywhere \cite{majumdarzarnescu10}. Thus, for analytic $\varphi$, Theorems~\ref{thm2D} and \ref{thmrad} proved below are valid if we replace the assumption that $Q$ be purely uniaxial, with the assumption that $Q$ be uniaxial in some open set.
\end{rem}

\begin{rem}\label{remhedgehog}
The spherically symmetric radial hedgehog \cite{mkaddemgartland00} provides an example of purely uniaxial equilibrium (see also Section~\ref{srad} below). However, in the particular case of the radial hedgehog, uniaxial symmetry is a consequence of spherical symmetry, for which Palais' Principle of Symmetric Criticality applies \cite{palais79}. 
The Principle of Symmetric Criticality is a general tool which allows to prove existence of symmetric equilibria. Roughly speaking, if the free energy and the space of admissible configurations are `symmetric', then the Principle asserts the following: any symmetric configuration which is an equilibrium with respect to symmetry-preserving perturbations is \emph{automatically} an equilibrium with respect to symmetry-breaking perturbations also. Of course the meaning of `symmetric' needs to be precised: see \cite{palais79} for a rigorous mathematical framework in which this Principle is valid.
\end{rem}

However, in general the Principle of Symmetric Criticality does not apply to uniaxial symmetry, as is suggested by the following result (see Remark~\ref{remcritsym} below).

\begin{prop}\label{ELuniax}
Let $\omega\subset \mathbb R^3$ be an open set. Let $s\colon\omega\to\R$ and $n\colon\omega\to\mathbb S^2$ be smooth maps such that the corresponding uniaxial $Q$ \eqref{ansatz} satisfies the equilibrium equation \eqref{EL}. Then $s$ and $n$ satisfy
\begin{equation}\label{U}
\left\lbrace
\begin{gathered}
\Delta s = 3 |\nabla n|^2 s +\frac{1}{L}(2s\partial_1\varphi + s^2 \partial_2\varphi), \\
s\Delta n + 2 (\nabla s \cdot\nabla)n = -s|\nabla n|^2 n,
\end{gathered}
\right.
\end{equation}
and, in regions where $s$ does not vanish, $n$ satisfies the extra equation
\begin{equation}\label{extra}
2 \sum_{k=1}^3 \partial_k n \otimes \partial_k n  = |\nabla n|^2 \left(I-n \otimes   n\right).
\end{equation}
\end{prop}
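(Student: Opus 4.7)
The plan is to substitute the uniaxial ansatz $Q=s(n\otimes n-\tfrac13 I)$ directly into the Euler--Lagrange equation \eqref{EL} and to decompose the resulting $\mathcal S$-valued equation according to the natural splitting of $\mathcal S$ induced by the eigenspaces of $n\otimes n-\tfrac13 I$ (namely $\mathrm{span}(n)$ and $n^\perp$). Setting $A=(\nabla s\!\cdot\!\nabla)n=\sum_k \partial_k s\,\partial_k n$ and $B=\sum_k\partial_k n\otimes\partial_k n$, I would first compute
\begin{equation*}
\Delta Q=\Delta s\!\left(n\otimes n-\tfrac13 I\right)+2(A\otimes n+n\otimes A)+s(\Delta n\otimes n+n\otimes\Delta n)+2sB,
\end{equation*}
using the identities $n\cdot\partial_k n=0$ and $n\cdot\Delta n=-|\nabla n|^2$ that come from differentiating $|n|^2=1$. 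A short algebraic computation using $(n\otimes n)^2=n\otimes n$ gives $|Q|^2=\tfrac23 s^2$ and $Q^2-\tfrac{|Q|^2}3 I=\tfrac{s^2}{3}(n\otimes n-\tfrac13 I)$, so \eqref{EL} becomes the clean tensorial identity
\begin{equation*}
L\Delta Q=\bigl(2s\,\partial_1\varphi+s^2\,\partial_2\varphi\bigr)\!\left(n\otimes n-\tfrac13 I\right).
\end{equation*}

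Next I would apply this equation to the vector $n$. Using $Bn=0$ (since $\partial_k n\perp n$), one finds
\begin{equation*}
(\Delta Q)n=\tfrac23\Delta s\,n+2A+s\Delta n-s|\nabla n|^2 n,
\end{equation*}
while the right-hand side equals $\tfrac23(2s\partial_1\varphi+s^2\partial_2\varphi)\,n$. Projecting onto $\mathrm{span}(n)$ (using $A\perp n$ and $\Delta n\cdot n=-|\nabla n|^2$) produces the scalar equation for $\Delta s$ in \eqref{U}; projecting onto $n^\perp$ produces the vector equation for $n$ in \eqref{U}.

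To obtain the extra equation \eqref{extra}, I would apply the tensorial identity above to an arbitrary $v\in n^\perp$. One computes
\begin{equation*}
(\Delta Q)v=-\tfrac13\Delta s\,v+\bigl(2A\cdot v+s\Delta n\cdot v\bigr)n+2sBv,
\end{equation*}
whose component along $n$ merely reproduces the second equation of \eqref{U} (using again $Bv\cdot n=0$), while the component in $n^\perp$ gives
\begin{equation*}
-\tfrac{L}{3}\Delta s\,v+2LsBv=-\tfrac13\bigl(2s\partial_1\varphi+s^2\partial_2\varphi\bigr)v.
\end{equation*}
Substituting the already-derived expression for $\Delta s$, the bulk terms cancel and one is left with $2sBv=s|\nabla n|^2\,v$ for every $v\in n^\perp$. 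Combined with the identity $Bn=0=(I-n\otimes n)n$, this yields $2sB=s|\nabla n|^2(I-n\otimes n)$ as tensors, hence \eqref{extra} wherever $s\neq 0$.

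The main obstacle is not conceptual but organizational: the algebra must be executed carefully so that the contributions involving $\Delta s$ and $\varphi$ cancel exactly in the $n^\perp\otimes n^\perp$ block, leaving the geometric identity \eqref{extra}. Conceptually the origin of the extra equation is transparent --- the ansatz $(s,n)$ has only four effective scalar degrees of freedom, while \eqref{EL} is a symmetric traceless tensor equation with five independent scalar components --- so the overdetermination must manifest itself somewhere, and it does so precisely in the $n^\perp\otimes n^\perp$ block of the projected equation.
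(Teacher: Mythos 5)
Your proposal is correct and follows essentially the same route as the paper: plug the ansatz into \eqref{EL} and split the resulting $\mathcal S$-valued identity according to the $n$/$n^\perp$ decomposition (the paper does this by writing the equation as a sum of three mutually orthogonal tensors $M_1,M_2,M_3$ and concluding each vanishes, whereas you test the matrix identity against $n$ and against $v\in n^\perp$, which extracts the same blocks). The only cosmetic difference is that your $n^\perp\otimes n^\perp$ block mixes the $\Delta s$ equation with \eqref{extra}, so you need the substitution-and-cancellation step, while the paper's choice of orthogonal components yields \eqref{extra} directly; both computations check out.
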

\begin{proof}
Plugging the uniaxial \textit{ansatz} \eqref{ansatz} into the equilibrium equation \eqref{EL}, we find, after rearranging the terms,
\begin{equation*}
M_1+M_2+M_3=0,
\end{equation*}
where
\begin{align*}
M_1 & = \left[ \Delta s - 3|\nabla n|^2 s -\frac{1}{L}(2s\partial_1\varphi + s^2 \partial_2\varphi)\right]\left(n\otimes n-\frac{1}{3} \mathbf I\right), \\
M_2 & =2n \odot (s\Delta n + 2(\nabla s\cdot \nabla)n + s|\nabla n|^2 n),\\
M_3 & = s \left[ 2\sum_k \partial_k n \otimes\partial_k n + |\nabla n|^2 \left(  n\otimes n-\mathbf I\right) \right].
\end{align*}
Here $\odot$ denotes the symmetric tensor product: the $(i,j)$ component of $n \odot m$ is $(n_i m_j+n_j m_i)/2$.

Using the fact that $|n|^2$ is constant equal to 1, which implies in particular $n\cdot\partial_j n=0$ and $n\cdot\Delta n + |\nabla n|^2=0$, we find that
\begin{align*}
M_1 & \in \mathcal S \cap \mathrm{Span} \left( n\odot n-\frac 13 \mathbf I\right),\\
M_2 & \in \mathcal S \cap \mathrm{Span} \left\lbrace n\odot v \colon v\in n^\perp\right\rbrace,\\
M_3 & \in \mathcal S \cap \mathrm{Span} \left \lbrace v \odot w \colon v,w\in n^\perp \right\rbrace.
\end{align*}
Recall here that $\mathcal S$ is the order parameter space \eqref{S} of traceless symmetric matrices.
In particular, $M_1$, $M_2$ and $M_3$ are pairwise orthogonal, and we deduce that
\begin{equation*}
M_1=M_2=M_3=0.
\end{equation*}
We conclude that \eqref{U} and \eqref{extra} hold.
\end{proof}

\begin{rem}\label{remcritsym}
The system \eqref{U} satisfied by $(s,n)$ is nothing else than the Euler-Lagrange equation associated to the energy
\begin{equation*}
F(s,n) = \mathcal F(Q) = \int \left[\frac{L}{2}\left(\frac{2}{3}|\nabla s|^2 + 2s^2|\nabla n|^2\right)+\varphi(2s^2/3,2s^3/9) \right] dx,
\end{equation*}
under the constraint $|n|^2=1$.
In other words \eqref{U} expresses the fact that $Q$ is an equilibrium of $\mathcal F$ with respect to perturbations preserving the symmetry constraint $Q\in \mathcal U$. The minimization of the functional $F$ has been studied in \cite{lin91}. On the other hand, the extra equation \eqref{extra} expresses the fact that $Q$ is an equilibrium with respect to symmetry-breaking perturbations. Since \eqref{extra} is not trivial, we see that Palais' Principle of Symmetric Criticality does not apply to uniaxial symmetry.
\end{rem}

\begin{rem}\label{remextra}
The extra equation \eqref{extra} is of the form $M_3=0$, with $M_3$ taking its values in $\mathcal S$ of dimension 5: it contains 5 scalar equations. However, it has been shown during the proof of Proposition~\ref{ELuniax} that, due to the constraint $n\in\mathbb S^2$, it holds in fact
\begin{equation*}
M_3 \in \mathcal M := \mathcal S \cap \mathrm{Span} \left\lbrace v \otimes w \colon v,w\in n^\perp \right\rbrace.
\end{equation*}
Since $\mathcal M$ and $\mathbb S^2$ are two-dimensional, the information really carried by \eqref{extra} corresponds to a system of two first order partial differential equations, with two unknown. In particular, given a generic non characteristic boundary data $n_{|\partial\omega}=n_0$, equation \eqref{extra} should have exactly one local solution. Therefore, system \eqref{U} coupled with Dirichlet boundary conditions and the extra equation \eqref{extra} is strongly overdetermined. We expect solutions to exist only in very `symmetric' cases. The results presented in the sequel are indeed of such a nature.
\end{rem}

\section{In one and two dimensions}\label{s2D}

In this section we concentrate on one- and two-dimensional configurations, which occur in case of translational invariance in at least one direction.
Such a symmetry assumption is actually relevant for many nematic systems that are interesting both theoretically and for application purposes. For instance, in nematic cells bounded by two parallel plates with competing anchoring, one usually looks for one-dimensional solutions \cite{palffymuhoraygartlandkelly94, bisigartlandrossovirga03,ambrozicbisivirga08}. Such hybrid nematic cells provide a model system for understanding the physics of frustration, and this kind of geometry occurs in several nematic based optical devices. 
Another relevant geometry is the cylindrical one, in which two dimensional configurations can be considered \cite{ kraljvirgazumer99, sonnetkilianhess95,lucarey07a,lucarey07b}, with applications to high performance fibers \cite{cheongrey04,chanetal05,jianhurtsheldoncrawford06}.

Our conclusion (see Theorem~\ref{thm2D} below) is that a one- or two-dimensional equilibrium configuration can be purely uniaxial only if the director field is constant. Thus in the translation-invariant case, the system \eqref{U} coupled with \eqref{extra} is so strongly overdetermined that it admits only trivial solutions.

\begin{thm}\label{thm2D}
Let $\Omega\subset\R^3$ be an open set and $Q$ be a smooth solution of the equilibrium equation \eqref{EL}. Assume that $Q$ is invariant in one direction: there exists $\nu_0\in\mathbb S^2$ such that $\nu_0 \cdot\nabla Q \equiv 0$. 

\begin{itemize}

\item[(i)] If $Q$ is purely uniaxial (i.e. takes values in $\mathcal U$) then $Q$ has constant director in every connected component of $\lbrace Q\neq 0\rbrace$. That is, for every connected component $\omega$ of $\lbrace Q\neq 0\rbrace$, there exists a uniform director $n_0=n_0(\omega)\in\mathbb S^2$ such that
\begin{equation*}
Q(x)=s(x)\left( n_0\otimes n_0 - \frac 13 \mathbf I \right)\qquad\forall x\in\omega.
\end{equation*}
for some scalar vector field $s\colon\Omega\to\R$.

\item[(ii)] If in addition $Q$ is analytic and $\Omega$ is connected, then the director is the same in every connected component of $\lbrace Q\neq 0\rbrace$: there exists $n_0\in\mathbb S^2$ such that
\begin{equation*}
Q(x)=s(x)\left( n_0\otimes n_0 - \frac 13 \mathbf I \right)\qquad\forall x\in\Omega.
\end{equation*}

\end{itemize}
\end{thm}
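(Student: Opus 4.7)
The plan is to use the extra equation \eqref{extra} together with translational invariance to force $n$ to be rigid on each connected component of $\{Q\neq 0\}$. Up to a rotation assume $\nu_0=\mathbf{e_z}$, so that $\partial_3 Q\equiv 0$. Fix a component $\omega$ of $\{Q\neq 0\}$; by Remark~\ref{remsnsmooth} one has, on a small ball around any $x_0\in\omega$, a smooth local factorization $Q=s(n\otimes n-\tfrac13 \mathbf I)$ with $s\neq 0$ solving \eqref{U} and \eqref{extra}. Restricting \eqref{extra} to the 2D subspace $n^\perp$, the matrix $V$ whose columns are the coordinates of $\partial_1 n,\partial_2 n$ in an orthonormal basis of $n^\perp$ satisfies $VV^T=\tfrac12(|\partial_1 n|^2+|\partial_2 n|^2)\,I_2$. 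This is equivalent to $|\partial_1 n|=|\partial_2 n|$ and $\partial_1 n\cdot\partial_2 n=0$; that is, $n$ is weakly conformal as a map $\mathbb R^2\to\mathbb S^2$.

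Suppose $\nabla n(x_0)\ne 0$. On a small neighborhood both $\partial_i n$ are nonzero, so $\partial_2 n=\epsilon\, n\times\partial_1 n$ for some locally constant $\epsilon=\pm 1$. Differentiating and using $\partial_1\partial_2 n=\partial_2\partial_1 n$ together with $n\times(n\times v)=-v$ for $v\in n^\perp$ yields $n\times\Delta n=0$; combined with $n\cdot\Delta n=-|\nabla n|^2$, this gives the harmonic-map equation $\Delta n=-|\nabla n|^2\,n$. Substituted into the second equation of \eqref{U}, the leading terms cancel and one is left with $\partial_1 s\,\partial_1 n+\partial_2 s\,\partial_2 n=0$; linear independence of $\partial_1 n,\partial_2 n$, together with $\partial_3 s\equiv 0$ from translation invariance, forces $\nabla s\equiv 0$ near $x_0$. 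Plugging the constant value $s\equiv s_0\neq 0$ into the first equation of \eqref{U} then forces $|\nabla n|^2$ to equal a fixed constant depending only on $s_0$ and $\varphi$.

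So $n$ is a weakly conformal harmonic map from a 2D disk into $\mathbb S^2$ with constant energy density. In a stereographic chart $\pi$ avoiding one pole, the composition $f=\pi\circ n:B'\to\mathbb C$ is (anti-)holomorphic and satisfies $|f'|^2/(1+|f|^2)^2\equiv C$ for some constant $C\ge 0$. If $C>0$, applying $\partial_z\partial_{\bar z}$ to $\log|f'|^2-2\log(1+|f|^2)=\log C$ yields $0-2C=0$ (since $\log|f'|^2$ is harmonic on $\{f'\ne 0\}$ and $\partial_z\partial_{\bar z}\log(1+|f|^2)=|f'|^2/(1+|f|^2)^2=C$), a contradiction; hence $C=0$, $f'\equiv 0$, and $n$ is constant near $x_0$, contradicting $\nabla n(x_0)\ne 0$. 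This proves~(i). I expect this final Liouville-type rigidity step to be the main obstacle; the prior derivations of conformality, harmonicity, and constant energy density are essentially formal consequences of \eqref{U} and \eqref{extra}.

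For part (ii), given analytic $Q$ on a connected $\Omega$, fix one component $\omega_0$ with constant director $n_0$ and set $A_0=n_0\otimes n_0-\tfrac13 \mathbf I$. Define $V=\{x\in\Omega : Q=sA_0 \text{ in a neighborhood of } x \text{ for some analytic }s\}$. Then $V$ is open, contains $\omega_0$, and is closed in $\Omega$ by analytic continuation: on any ball $B\subset\Omega$ where $Q=sA_0$ on a nonempty open subset, the real-analytic identity propagates to all of $B$. Connectedness of $\Omega$ gives $V=\Omega$, so every component of $\{Q\ne 0\}$ has the same director $\pm n_0$.
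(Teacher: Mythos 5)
Your proof is correct. For part \textit{(i)} it follows the paper's strategy quite closely: both arguments extract the conformality relations $|\partial_1 n|=|\partial_2 n|$, $\partial_1 n\cdot\partial_2 n=0$ from \eqref{extra}, use the second equation of \eqref{U} to force $\nabla s=0$ wherever $\nabla n\neq 0$ (the paper computes $\Delta n\cdot\partial_i n=0$ directly from the conformality relations, whereas you pass through the harmonic map equation $\Delta n=-|\nabla n|^2 n$ via $\partial_2 n=\epsilon\, n\times\partial_1 n$; both are fine), and then use the first equation of \eqref{U} to make $|\nabla n|^2$ constant. The concluding rigidity step is the same obstruction in two different costumes: the paper rescales and invokes Gauss's \emph{Theorema egregium} (no local isometry from the flat plane to the round sphere), while you do the equivalent explicit computation $\partial_z\partial_{\bar z}\log\bigl(|f'|^2/(1+|f|^2)^2\bigr)=-2C$ in a stereographic chart; your version is self-contained, the paper's is shorter. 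The genuinely different part is \textit{(ii)}: the paper proves a ball-wise claim by restricting $Q$ to a segment crossing a zero of $Q$ and showing all derivatives of $\widetilde Q$ vanish there, then chains balls together; you instead run an open--closed argument on the connected set $\Omega$. Your route is cleaner and shorter, but the phrase ``the real-analytic identity propagates'' deserves one more line: the propagating object should be the globally defined analytic map $P:=Q-\tfrac32\,\mathrm{tr}(QA_0)\,A_0$ (note $s=\tfrac32\,\mathrm{tr}(QA_0)$ whenever $Q=sA_0$, since $\mathrm{tr}(A_0^2)=2/3$), so that ``$Q=sA_0$ near $x$ for some analytic $s$'' is exactly ``$P\equiv 0$ near $x$'', and the identity theorem on a connected ball $B$ meeting $V$ gives $P\equiv 0$ on $B$, whence $V$ is closed. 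With that made explicit, your argument for \textit{(ii)} is complete and arguably preferable to the paper's.
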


\begin{rem}\label{1D}
The one-dimensional case is of course contained in the two-dimensional one, but we find useful to present a specific, much simpler argument here. In one dimension the extra equation \eqref{extra} becomes
\begin{equation*}
2 n' \otimes n'  = |n'|^2 (\mathbf I - n\otimes n),
\end{equation*} 
which readily implies $n'\equiv 0$. Thus in one dimension the conclusion of Theorem~\ref{thm2D} is achieved using only the extra equation \eqref{extra}.

In two dimensions however, the proof of Theorem~\ref{thm2D} is more involved. In particular, the extra equation \eqref{extra} does admit non trivial solutions. For instance a cylindrically symmetric director field introduced by Cladis and Kl\'e{}man \cite{cladiskleman72} and studied further in \cite{bethuelbreziscolemanhelein92}, which is given in cylindrical coordinates by
\begin{equation*}
n(r,\theta,z)=\cos \psi(r) \:\mathbf{e_r} + \sin \psi(r) \:\mathbf{e_z}\quad\text{with }r\frac{d\psi}{dr} = \cos\psi,
\end{equation*}
satisfies \eqref{extra}. But there cannot exist any scalar field $s$ such that $(s,n)$ solves \eqref{U}.
\end{rem}

\begin{proof}[Proof of Theorem~\ref{thm2D}]
Since the free energy density is frame invariant (see Remark~\ref{remframeinv}) we may assume that $\nu_0=\mathbf{e_z}$, so that $\partial_3 Q \equiv 0$.

We start by proving assertion \textit{(i)} of Theorem~\ref{thm2D}. Fix a connected component $\omega$ of $\lbrace Q\neq 0 \rbrace$ and define the smooth map $s\colon\omega\to\R$ by the formula
\begin{equation*}
s= 3\frac{\mathrm{tr}(Q^3)}{|Q|^2}.
\end{equation*}
Recall that $s(x)$ is the scalar order parameter of $Q(x)\in\mathcal U$ (see Remark~\ref{remdetsn}). In particular, $s$ does not vanish in $\omega$. In the sequel we are going to show that the smooth map $Q/s$ is locally constant in $\omega$, which obviously implies \textit{(i)}.

Let $x_0\in\omega$. We claim that there exists an open ball $B\subset\omega$ centered at $x_0$ and a smooth map $n\colon B\to\mathbb S^2$ such that the formula for $Q$ in terms of $s$ and $n$ \eqref{ansatz} holds in  $B$ (as announced in Remark~\ref{remsnsmooth}). 

Indeed, fix a director $n_0\in\mathbb S^2$ of $Q(x_0)$: it holds $Q(x_0)n_0=s_0 n_0$, with $s_0=s(x_0)$. Since the eigenvalue $s_0$ is simple and $Q(x_0)$ maps $n_0^\perp$ to $n_0^\perp$, the implicit function theorem can be applied to the map
\begin{equation*}
\omega \times \R \times n_0^\perp \to \R^3,\quad (x,s,v)\mapsto \left( Q(x)-s \right)(n_0+v)
\end{equation*}
to obtain smooth maps $v$ and $\tilde s$ defined in a neighborhood of $x_0$ and solving uniquely
\begin{equation*}
Q(x)(n_0+v) = \tilde s (n_0 + v)\quad\text{for }\tilde s\approx s_0,\; v\approx 0\in n_0^\perp.
\end{equation*}
Since, for $x$ close enough to $x_0$, eigenvalues of $Q(x)$ distinct from $s(x)$ are far from $s_0$, it must hold $\tilde s = s$. Therefore $n=(n_0+v)/|n_0+v|$ provides a smooth map such that \eqref{ansatz} holds in a neighborhood of $x_0$, which we may assume to be an open ball $B$.

To prove \textit{(i)} it remains to show that $n$ is constant in $B$, which obviously implies that $Q/s$ is locally constant (since $x_0\in \omega$ is arbitrary).

We start by noting that, since by assumption $\partial_3 Q=0$, it holds 
\begin{equation*}
\partial_3 s = \frac 32 n\cdot (\partial_3 Q)n = 0,\quad
\partial_3 n =\frac{1}{s} (\partial_3 Q) n = 0.
\end{equation*}
Thus \eqref{extra} becomes
\begin{equation*}
A:=2\partial_1 n \otimes \partial_1 n + 2 \partial_2 n \otimes \partial_2 n 
- \left( |\partial_1 n|^2 + |\partial_2 n|^2\right) (I-n\otimes n) = 0.
\end{equation*}
We deduce that
\begin{equation*}
\partial_1 n \cdot A\partial_2 n = |\nabla n|^2 \partial_1 n \cdot \partial_2 n = 0,
\end{equation*}
which implies
\begin{equation}\label{2D1}
\partial_1 n \cdot \partial_2 n = 0\quad\text{in }B.
\end{equation}
Using this last fact, we compute
\begin{align*}
\partial_1 n \cdot A \partial_1 n & = |\partial_1 n|^2 \left(|\partial_1 n|^2-|\partial_2 n|^2 \right)=0, \\
\partial_2 n \cdot A \partial_2 n & = |\partial_2 n|^2\left(|\partial_2 n|^2 - |\partial_1 n|^2 \right) = 0,
\end{align*}
from which we infer
\begin{equation}\label{2D2}
|\partial_1 n|^2 = |\partial_2 n|^2\quad\text{in }B.
\end{equation}
As a consequence of \eqref{2D1} and \eqref{2D2}, we obtain that
\begin{align*}
\Delta n \cdot \partial_1 n  & = \frac 12 \partial_1 \left[ |\partial_1 n|^2 - |\partial_2 n|^2 \right] + \partial_2 \left[\partial_1 n\cdot\partial_2 n \right] = 0,  \\
\Delta n \cdot \partial_2 n & = \frac 12 \partial_2 \left[ |\partial_2 n|^2 - |\partial_1 n|^2 \right] + \partial_1 \left[\partial_1 n\cdot\partial_2 n \right] = 0.
\end{align*}
That is, the vector $\Delta n$ is orthogonal to both vectors $\partial_1 n$ and $\partial_2 n$.
Therefore, taking the scalar product of the second equation of \eqref{U} with $\partial_1 n$ and $\partial_2 n$, we are left with
\begin{equation}\label{2D3}
\partial_1 s |\nabla n|^2 = \partial_2 s |\nabla n|^2 = 0 \quad\text{in }B.
\end{equation}
We claim that \eqref{2D3} implies in fact 
\begin{equation}\label{2D4}
|\nabla n|^2 = 0 \quad\text{in }B.
\end{equation}
Assume indeed that \eqref{2D4} does not hold, so that $|\nabla n|^2>0$ in some open set $W\subset B$. Then by \eqref{2D3} the scalar field $s$ is constant in $W$, and the first equation of \eqref{U} implies that $|\nabla n|^2$ is constant in $W$. Up to rescaling the variable, we have thus obtained a map $n$ mapping an open subset of the plane $\R^2$ into the sphere $\mathbb S^2$ and satisfying
\begin{equation*}
\partial_1 n \cdot \partial_2 n = 0,\quad |\partial_1 n|^2 = |\partial_2 n|^2=1.
\end{equation*}
That is, $n$ is a local isometry. Since the plane has zero curvature while the sphere has positive curvature, the existence of such an isometry contradicts Gauss's \emph{Theorema egregium}. Hence we have proved the claim \eqref{2D4}, and $n$ must be constant in $B$. This ends the proof of \textit{(i)}.

Now we turn to the proof of assertion \textit{(ii)} of Theorem~\ref{thm2D}. We start by proving the following

\vspace{.2em}
\textit{Claim:} for any open ball $B\subset\Omega$, $Q$ has constant director in $B$: there exists $n_0\in\mathbb S^2$ such that 
$Q=s(n_0\otimes n_0 -\mathbf I /3)$ in $B$.
\vspace{.2em}

Note that this \textit{Claim} is simply a consequence of \textit{(i)} if $B\subset\lbrace Q\neq 0\rbrace$. The additional information here is that $B\cap \lbrace Q\neq 0\rbrace$ may not be connected.

If $Q\equiv 0$ in $B$ the \textit{Claim} is obvious, so we assume $Q(x_0)\neq 0$ for some $x_0\in B$. Let $n_0\in\mathbb S^2$ be such that
\begin{equation}\label{2D5}
Q(x_0)=s(x_0)\left( n_0\otimes n_0 -\frac 13 \mathbf I \right).
\end{equation}
We now prove the \textit{Claim} by contradiction: assume that there exists $x_1\in B$ such that
\begin{equation}\label{2D6}
Q(x_1)\neq s(x_1)\left( n_0\otimes n_0 -\frac 13 \mathbf I \right).
\end{equation}
In particular, $Q(x_1)\neq 0$. Consider the segment $S=[x_0,x_1]$ contained in $B$ and therefore in $\Omega$. Since $Q$ is analytic and does not vanish identically on $S$, the set $S\cap \lbrace Q=0\rbrace$ must be discrete (and thus finite by compactness). 

Since \eqref{2D6} holds, the (locally constant) director is not the same in the respective connected components of $x_0$ and $x_1$ in $S\cap \lbrace Q\neq 0\rbrace$. As a consequence, there must exist $x_2\in S$, $n_1\in\mathbb S^2\setminus \lbrace\pm n_0\rbrace$ and $\delta>0$ such that:
\begin{gather*}
\lbrace Q=0\rbrace \cap S \cap B_\delta(x_2) = \lbrace x_2 \rbrace,\\
Q(x)=s(x)\left( n_0 \otimes n_0 -\frac 13 \mathbf I \right) \quad \forall x\in [x_2,x_0]\cap B_\delta (x_2),\\
Q(x)=s(x)\left( n_1 \otimes n_1 -\frac 13 \mathbf I \right) \quad \forall x\in [x_2,x_1]\cap B_\delta (x_2).
\end{gather*}
Hence for small enough $\varepsilon$, the analytic map
\begin{equation*}
\widetilde Q\colon (-\varepsilon,\varepsilon)\ni t \mapsto Q(x_2 + t (x_0-x_1)) \in\mathcal U
\end{equation*}
vanishes exactly at $t=0$, has constant director $n_0$ for $t>0$ and constant director $n_1$ for $t<0$. The associated map $\tilde s(t)$ is smooth in $(-\varepsilon,\varepsilon)\setminus\lbrace 0\rbrace$ and it holds
\begin{equation*}
\widetilde Q'(t)=\begin{cases}
\tilde s'(t) \left( n_0\otimes n_0 -\frac 13 \mathbf I \right) & \text{for }t> 0\\
\tilde s'(t) \left( n_1\otimes n_1 -\frac 13 \mathbf I \right) & \text{for }t< 0.
\end{cases}
\end{equation*}
We deduce that $l^+ := \lim_{0^+} \tilde s'$ and $l^-:=\lim_{0^-} \tilde s'$ exist and satisfy
\begin{equation*}
\widetilde Q'(0) = l^+ \left( n_0\otimes n_0 -\frac 13 \mathbf I \right) = l^- \left( n_1\otimes n_1 -\frac 13 \mathbf I \right).
\end{equation*}
Since $n_0\neq \pm n_1$, it must hold $l^+=l^-=0$. Thus $\tilde s$ is in fact $C^1$ in $(-\varepsilon,\varepsilon)$ and satisfies $\tilde s'(0)=0$.

For any integer $k\geq 0$ it holds
\begin{equation*}
\widetilde Q^{(k)}(t)=\begin{cases}
\tilde s^{(k)}(t) \left( n_0\otimes n_0 -\frac 13 \mathbf I \right) & \text{for }t > 0\\
\tilde s^{(k)}(t) \left( n_1\otimes n_1 -\frac 13 \mathbf I \right) & \text{for }t < 0,
\end{cases}
\end{equation*}
and we may repeat the same argument as above to show by induction that $\tilde s$ is smooth in $(-\varepsilon,\varepsilon)$ and all its derivatives vanish at 0. In particular we find that
\begin{equation*}
\widetilde Q^{(k)}(0)=0 \qquad\forall k\geq 0,
\end{equation*}
which implies that $Q\equiv 0$ on $S$, since $\widetilde Q$ is analytic: we obtain a contradiction, and the above \textit{Claim} is proved.

We may now complete the proof of assertion \textit{(ii)} of Theorem~\ref{thm2D}. Let $x,y\in\Omega$ and $n_x,n_y\in\mathbb S^2$ be such that
\begin{equation*}
Q(x)=s(x)\left(n_x\otimes n_x -\frac 13 \mathbf I \right),\quad Q(y)=s(y)\left(n_y\otimes n_y - \frac 13 \mathbf I \right).
\end{equation*}
We aim at showing that $n_x$ and $n_y$ can be choosen so that $n_x=\pm n_y$. In particular we may assume that $Q(x)\neq 0$ and $Q(y)\neq 0$.

Since $\Omega$ is open and connected (and thus path-connected), there exists a ``chain of open balls'' from $x$ to $y$. More explicitly: there exist points
\begin{equation*}
x=x_0,x_1,\ldots,x_{N-1},x_N=y \in\Omega,
\end{equation*}
and open balls
\begin{equation*}
B_0\ni x_0, B_1\ni x_1 , \ldots , B_N \ni x_N,
\end{equation*}
such that
\begin{equation*}
B_k \cap B_{k+1} \neq \emptyset \qquad k = 0,\ldots, N_1.
\end{equation*}
We denote by $n_k\in\mathbb S^2$ a constant director in $B_k$, provided by the above \textit{Claim}. In particular $n_0=\pm n_x$ and $n_N=\pm n_y$. 

In the intersection $B_k\cap B_{k+1}$, $n_k$ and $n_{k+1}$ are both admissible constant directors. Since $Q$ is analytic and not uniformly zero, it can not be uniformly zero in the non empty open set $B_k\cap B_{k+1}$, and we deduce that it must hold $n_k = \pm n_{k+1}$. Therefore we find
\begin{equation*}
n_0 = \pm n_1 = \pm n_2 = \cdots = \pm n_N,
\end{equation*}
and in particular $n_x=\pm n_y$. The proof of \textit{(ii)} is complete.
\end{proof}

\begin{rem}\label{remQsmooth}
As already pointed out in Section~\ref{smodseq}, the assumption that $Q$ is smooth is very natural, since physically relevant solutions are bounded and therefore smooth. The additional assumption of analyticity in assertion \textit{(ii)} is also natural, since it is satisfied whenever the bulk free energy is analytic (and this is the case for the bulk free energy usually considered).
\end{rem}

\section{In a spherical droplet with radial anchoring}\label{srad}

In this section we consider a droplet of nematic subject to strong radial anchoring on the surface. Droplets of nematic  play an important role in some electro-optic applications, like polymer dispersed liquid crystals (PDLC) devices (see the review article \cite{lopezleon11} and references therein). Moreover, this problem is important theoretically as a model problem for the study of point defects, due to the universal features it exhibits \cite{kraljvirga01}.

The droplet containing the nematic is modelled as an open ball
\begin{equation*}
B_R = \left\lbrace x\in\R^3 \colon |x|<R \right\rbrace,
\end{equation*}
and strong radial anchoring corresponds to Dirichlet boundary conditions of the form
\begin{equation}\label{bc}
Q(x)=s_0\left(\frac x R \otimes \frac x R - \frac 13 I \right) \qquad\text{for }|x|=R,
\end{equation}
for some fixed $s_0\neq 0$.

In this setting, the equilibrium equation \eqref{EL} admits a particular symmetric solution of the form
\begin{equation}\label{radialQ}
Q(x)=s(r)\left(\frac x r \otimes \frac x r - \frac 13 I \right) \qquad\forall x\in B_R,
\end{equation}
where $r=|x|$, and $s:(0,R)\to\mathbb R$ solves
\begin{equation}\label{sradial}
\frac{d^2s}{dr^2} + \frac{2}{r}\frac{ds}{dr}-\frac{6}{r^2}s = \frac{1}{L}\big(2s\partial_1\varphi(2s^2/3,2s^3/9) +s^2\partial_2\varphi(2s^2/3,2s^3/9) \big),
\end{equation}
with boundary conditions $s(0)=0$, $s(R)=s_0$. We call such a solution \emph{radial hedgehog}.

As already mentioned in Remark~\ref{remhedgehog}, the existence of such a solution is ensured by Palais' Principle of Symmetric Criticality \cite{palais79}. In fact, $G=SO(3)$ acts linearly and isometrically on the affine Hilbert space
\begin{equation*}
\mathcal H = \left\lbrace Q\in H^1(B_R;\mathcal S) \colon Q \text{ satisfies }\eqref{bc}\right\rbrace
\end{equation*}
by change of frame: the action is given by formula \eqref{framechange}. The free energy is frame invariant (see Remark~\ref{remframeinv}): it holds
\begin{equation*}
\mathcal F (g\cdot Q) = \mathcal F (Q)\qquad\forall g\in G,\; Q\in\mathcal H.
\end{equation*}
Denoting by $\Sigma \subset \mathcal H$ the subspace of symmetric configurations, i.e. of those maps $Q$ which satisfy $g\cdot Q = Q$ for all rotations $g\in G$, the Principle of Symmetric Criticality \cite[Section~2]{palais79} can be stated as follows: if $Q\in\Sigma$ is a critical point of $\mathcal F_{|\Sigma}$, then $Q$ is a critical point of $\mathcal F$, i.e. $Q$ solves the equilibrium equation \eqref{EL}.

Since $\Sigma$ consists precisely of those $Q$ which are of the form \eqref{radialQ}, and since the existence of a minimizer of $\mathcal F_{|\Sigma}$ is ensured by the direct method of the calculus of variations \cite{lamy13}, we obtain the existence of the radial hedgehog solution of \eqref{EL} described above by \eqref{radialQ}-\eqref{sradial}. 

Spherically symmetric solutions are in fact the only purely uniaxial solutions of this problem. This is the content of the next result.

\begin{thm}\label{thmrad}
Assume that $\varphi$ is analytic and satisfies \eqref{growthvarphi}.
Let $Q\in H^1(B_R,\mathcal S)$ solve the equilibrium equation \eqref{EL}, with radial boundary conditions \eqref{bc}.

If $Q$ is purely uniaxial (i.e. takes values in $\mathcal U$), then $Q$ is necessarily spherically symmetric: it satisfies \eqref{radialQ}-\eqref{sradial}.
\end{thm}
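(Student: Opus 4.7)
My plan is to work with the uniaxial ansatz $Q=s(n\otimes n - \mathbf I/3)$ and to exploit the strongly over-determined system formed by \eqref{U} together with the extra equation \eqref{extra}, combined with the special symmetry of the boundary data \eqref{bc}. The first step is to analyze $n$ on $\partial B_R$. Since $s_0\neq 0$, the implicit function argument used in the proof of Theorem~\ref{thm2D} provides a smooth director $n$ in a neighborhood of $\partial B_R$ inside $B_R$, with $n(x)=x/R$ on $\partial B_R$. The tangential derivatives are then fixed by differentiating the boundary condition: $\partial_\tau n = \tau/R$ for any $\tau$ tangent to $\partial B_R$, while $\partial_r n \perp n$ from $|n|^2=1$. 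Writing $V=\partial_r n|_{\partial B_R}$, one gets $\nabla n = V\otimes \hat x + (\mathbf I - \hat x\otimes\hat x)/R$; plugging into \eqref{extra} yields $2VV^T=|V|^2(\mathbf I-\hat x\otimes\hat x)$, and contraction with $V$ forces $V\equiv 0$.

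Next, on the open set $\Omega_0$ where $|\nabla n|^2 >0$, the Jacobian $\nabla n$ has a one-dimensional kernel; let $\nu$ be a locally defined smooth unit vector field spanning it. By the boundary step, $\nu = \hat r$ on $\partial B_R$. From \eqref{extra}, $n$ behaves as a conformal submersion onto $S^2$; in particular $(\nabla n)^T(\nabla n) = (|\nabla n|^2/2)(\mathbf I - \nu\otimes\nu)$, and $n$ is constant along integral curves of $\nu$. Taking the scalar product of the second equation in \eqref{U} with $\partial_j n$ and splitting into $\nu$-components and $\nu^\perp$-components, I expect to derive (where $s\neq 0$)
\begin{align*}
\mathrm{div}\bigl(|\nabla n|^2\, \nu\bigr) &= 0,\\
(\nu\cdot\nabla)\nu &= \frac{2}{s}\, P_{\nu^\perp}\nabla s.
\end{align*}

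The crux of the proof, and the step I expect to be hardest, is to propagate $\nu \equiv \hat r$ from $\partial B_R$ throughout $\Omega_0$. On $\partial B_R$ the condition $s\equiv s_0$ gives $P_{\nu^\perp}\nabla s=0$, so by the acceleration identity the integral curve $\gamma_p$ of $\nu$ starting at $Rp\in \partial B_R$ begins as a straight radial segment with zero initial curvature. To propagate this inward I would combine the conservation law $\mathrm{div}(|\nabla n|^2\nu)=0$, the acceleration identity, and the first equation of \eqref{U} — either by a direct fiber-by-fiber argument showing that $\nabla s$ stays parallel to $\nu$ along every fiber (so each fiber remains a straight line), or by a maximum-principle / unique-continuation argument applied to a suitable nonnegative scalar such as $|\partial_r n|^2$, which vanishes on $\partial B_R$ by the boundary step. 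This is the three-dimensional analogue of the use of the \emph{Theorema egregium} in the 2D proof: it is where the full rigidity of the extra equation \eqref{extra} must be cashed in against the rigidity of the boundary geometry.

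Once $\nu \equiv \hat r$, we have $\partial_r n \equiv 0$, so $n$ depends only on $\omega = x/|x|$; the boundary condition then forces $n(x)=\hat x$ on $\Omega_0$. An analyticity argument parallel to the one in Theorem~\ref{thm2D}(ii) (using the analyticity of $\varphi$ and hence of $Q$) shows that the zero set $\{Q=0\}$ must be reduced to $\{0\}$. The ansatz thereby reduces to $Q(x)=s(|x|)(\hat x\otimes\hat x - \mathbf I/3)$; plugging into the first equation of \eqref{U} yields exactly the radial ODE \eqref{sradial}, with boundary value $s(R)=s_0$ and the regularity condition $s(0)=0$ needed for $Q$ to be smooth at the origin.
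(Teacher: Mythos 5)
Your boundary analysis (deriving $\partial_r n=0$ on $\partial B_R$ from \eqref{extra} together with the Dirichlet data \eqref{bc}) is correct and coincides with the first step of the paper's key lemma. But the step you yourself identify as the crux --- propagating $\nu\equiv\hat r$, equivalently $\partial_r n\equiv 0$, from the boundary into the interior --- is not proved: you offer two candidate strategies (a ``fiber-by-fiber'' argument, or a ``maximum-principle / unique-continuation'' argument for $|\partial_r n|^2$) without carrying out either, and neither is routine. Concretely: the field $\nu$ is defined only on the open set where $Q\neq 0$ and $|\nabla n|^2>0$, whose geometry you do not control, so the integral curves of $\nu$ issued from $\partial B_R$ may leave that set or reach the zero set of $Q$ (where $n$ itself is undefined) before covering the region where you need the conclusion; the identities $\mathrm{div}(|\nabla n|^2\nu)=0$ and the acceleration formula are themselves only asserted (``I expect to derive''); and even granting them, $|\partial_r n|^2$ obeys at best a first-order transport-type structure along the unknown curves $\gamma_p$, for which vanishing boundary data does not yield interior vanishing without further input --- there is no elliptic inequality in sight to run a maximum principle on. Your final reduction also presumes $n=\hat x$ on all of $\Omega_0$ and defers the zero set of $Q$ to the end, whereas a priori $\{Q=0\}$ could be a nontrivial analytic variety separating components on which the director differs.

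The paper avoids interior propagation entirely. After your Step 1 it stays on $\partial B_R$: it shows $\partial_r^2 n=0$ there, and then, by a fourth-order Taylor expansion of $s$ and $n$ in $r-R$ plugged into \eqref{U} and \eqref{extra}, proves that $\partial_r s$ is \emph{constant} on $\partial B_R$ (Lemma~\ref{lemradderiv}); this delicate boundary computation is the real heart of the proof and has no counterpart in your proposal. With matching $C^1$ data it extends $Q$ outside the ball by an explicit spherically symmetric solution of \eqref{sradial}, notes that the glued map is a weak solution of \eqref{EL} in a larger ball and hence analytic, and concludes by analytic unique continuation that $Q(gx)=gQ(x)\,{}^t g$ for every rotation $g$. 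To complete your route you would have to make the interior propagation rigorous, including across $\{Q=0\}$; as written, the central step is missing.
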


\begin{rem}  A recent result of Henao and Majumdar \cite{henaomajumdar12,henaomajumdar13} is a direct corollary of Theorem~\ref{thmrad}. In \cite{henaomajumdar12,henaomajumdar13}, the authors   consider a spherical droplet with radial anchoring, with a bulk free energy $f_b$ of the form \eqref{fblit} and study the low temperature limit $a\to\infty$. They assume the existence of a sequence of uniaxial minimizers of the free energy, and show convergence towards a spherically symmetric solution.
\end{rem}

\begin{proof}[Proof of Theorem~\ref{thmrad}:]
The assumption \eqref{growthvarphi} on $\varphi$ ensures that $Q$ is bounded and therefore analytic (see Section~\ref{smodseq}).

Since $Q$ is smooth up to the boundary $\partial B$, and does not vanish on the boundary, we may proceed as in the proof of Theorem~\ref{thm2D} to obtain, in a neighborhood of each point of the boundary $\partial B_R$, smooth maps $s$ and $n$ such that the \textit{ansatz} \eqref{ansatz} holds (see also Remark~\ref{remsnsmooth}). The locally well-defined map $n$ is determined up to a sign. We determine it uniquely via the boundary condition
\begin{equation*}
n(x)=\frac x R \qquad \text{for }|x|=R.
\end{equation*}
Therefore we obtain, for some $\delta >0$, smooth maps
\begin{equation*}
s\colon B_R\setminus B_{(1-\delta)R} \to \mathbb R,\quad n\colon B_R\setminus B_{(1-\delta)R}\to\mathbb S^2,
\end{equation*}
such that
\begin{equation*}
Q(x)=s(x)\left( n(x)\otimes n(x)-\frac 13 I \right) \quad\text{for }(1-\delta)R < |x|<R.
\end{equation*}
The values of $s$ and $n$ on the boundary $\partial B_R$ are determined:
\begin{equation}\label{bcsn}
s(x)=s_0,\quad n(x)=\frac x R \qquad\text{for }|x|=R.
\end{equation}
We use the fact that $s$ and $n$ satisfy the system \eqref{U} and the extra constraint \eqref{extra}, to determine in addition their radial derivatives on the boundary:

\begin{lem}\label{lemradderiv}
It holds
\begin{equation*}
\partial_r n \equiv 0,\quad \partial_r s \equiv s_1,\qquad\text{on }\partial B_R,
\end{equation*}
for some constant $s_1\in\mathbb R$.
\end{lem}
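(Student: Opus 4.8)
The plan is to compute the full first-order boundary jet of the pair $(s,n)$ by feeding the Dirichlet data into \eqref{U} and the constraint \eqref{extra}. Fix $x_0\in\partial B_R$ and an orthonormal frame $(\mathbf{e_r},\mathbf{e_1},\mathbf{e_2})$ with $\mathbf{e_r}=x_0/R$ the outer normal and $\mathbf{e_1},\mathbf{e_2}$ tangent to the sphere. The boundary conditions \eqref{bcsn} give $n=\mathbf{e_r}$ and $s\equiv s_0$ on $\partial B_R$; differentiating along the sphere yields $\partial_{\mathbf{e_i}}s=0$ and $\partial_{\mathbf{e_i}}n=\mathbf{e_i}/R$ for $i=1,2$. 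Since $|n|\equiv1$ forces $\partial_r n\in n^\perp$, the only undetermined first-order data are $\partial_r n\in\mathrm{Span}(\mathbf{e_1},\mathbf{e_2})$ and $\partial_r s$, and the whole content of the lemma is that the former vanishes and the latter is constant.

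For $\partial_r n$ I would use only \eqref{extra}. Because $\sum_k\partial_k n\otimes\partial_k n$ is frame--independent, evaluating \eqref{extra} on $\partial B_R$ in the above frame gives
\begin{equation*}
2\Big(\partial_r n\otimes\partial_r n+\tfrac1{R^2}(\mathbf{e_1}\otimes\mathbf{e_1}+\mathbf{e_2}\otimes\mathbf{e_2})\Big)=\Big(|\partial_r n|^2+\tfrac2{R^2}\Big)(\mathbf{e_1}\otimes\mathbf{e_1}+\mathbf{e_2}\otimes\mathbf{e_2}).
\end{equation*}
Writing $\partial_r n=\alpha\mathbf{e_1}+\beta\mathbf{e_2}$, the off--diagonal component gives $\alpha\beta=0$ and the difference of the two diagonal components gives $\alpha^2=\beta^2$, so $\alpha=\beta=0$ and $\partial_r n\equiv0$ on $\partial B_R$. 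In particular $|\nabla n|^2\equiv2/R^2$ there, and the second equation of \eqref{U} reduces on the boundary to $\Delta n=-|\nabla n|^2n$; combined with $\Delta=\partial_r^2+\frac2r\partial_r+\frac1{r^2}\Delta_{\mathbb S^2}$ and $\Delta_{\mathbb S^2}n=-2n$ (as $n=\mathbf{e_r}$ restricts to a degree--one spherical harmonic), this also yields $\partial_r^2 n=0$ on $\partial B_R$.

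The constancy of $\partial_r s$ is the delicate point. The first equation of \eqref{U}, evaluated on $\partial B_R$, has a right--hand side depending only on $s_0$ and $|\nabla n|^2=2/R^2$, so $\Delta s$ is a universal constant there; but using $\Delta_{\mathbb S^2}s=0$ on the boundary only relates $\partial_r^2 s$ to $\partial_r s$, and does not pin down $\partial_r s$ itself. To reach it, I would show that the tangential derivative $\partial_{\mathbf{e_i}}(\partial_r s)$, which equals the mixed Hessian $(\nabla^2 s)(\mathbf{e_r},\mathbf{e_i})$, vanishes. Differentiating the second equation of \eqref{U} radially and pairing with $\mathbf{e_i}$ isolates exactly this mixed Hessian and, after using $\partial_r n=0$ and $\partial_r^2 n=0$ on $\partial B_R$, expresses it through the tangential part of $\partial_r\Delta n$ --- equivalently through a third radial derivative of $n$. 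The required vanishing of this $n$--quantity must then be extracted by differentiating the constraint \eqref{extra}.

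Closing this last step is the main obstacle. Crucially, \eqref{U} by itself cannot give it: the relations produced by differentiating \eqref{U} and restricting to the boundary close up into identities automatically satisfied by $(s,n)$, so the missing rigidity has to come from \eqref{extra}. Extracting it is made awkward by the non-commutativity of radial and tangential differentiation on the sphere (the sphere's curvature enters the commutators), and by the fact that the boundary is characteristic for the overdetermined system \eqref{extra} --- the linearization of \eqref{extra} in $\partial_r n$ vanishes at our data, so one cannot simply invoke Cauchy--Kovalevskaya to propagate the radial ansatz inward. A conceptually cleaner variant I would also try is to prove directly that $n\equiv x/|x|$ throughout a collar $\{(1-\delta)R<|x|<R\}$ using analyticity together with \eqref{extra}; granting this, the second equation of \eqref{U} collapses to $\frac2r(\nabla s-(\partial_r s)\mathbf{e_r})=0$, forcing $\nabla s$ radial, so $s=s(r)$ and $\partial_r s=s'(R)$ is constant on $\partial B_R$.
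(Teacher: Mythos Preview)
Your treatment of $\partial_r n\equiv 0$ and $\partial_r^2 n\equiv 0$ is correct and matches the paper's Steps~1 and~2 exactly: plug the boundary data into \eqref{extra} to kill $\partial_r n$, then use the second equation of \eqref{U} to kill $\partial_r^2 n$.

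The gap is in the constancy of $\partial_r s$. You correctly observe that differentiating the second equation of \eqref{U} once in $r$ links the tangential gradient of $s_1:=\partial_r s$ to the third radial derivative $m_1:=\tfrac16\partial_r^3 n$; in the paper this is the relation $6s_0 m_1 = -2\partial_\theta s_1\,\mathbf{e_\theta}-\tfrac{2}{\sin\theta}\partial_\varphi s_1\,\mathbf{e_\varphi}$. But this is circular: it expresses $m_1$ in terms of $\nabla_{\mathbb S^2}s_1$, not the other way around, so by itself it gives no constraint on $s_1$. Differentiating \eqref{extra} once (its order-$3$ term $A_3=0$) only constrains tangential derivatives of $m_1$ and again does not close the loop. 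The paper has to go one order deeper: it also uses the order-$2$ term of the second equation of \eqref{U} (which brings in $m_2:=\tfrac{1}{24}\partial_r^4 n$ and $s_2:=\tfrac12\partial_r^2 s$), the order-$0$ term of the first equation of \eqref{U} (which makes $s_2+s_1$ constant on $\partial B_R$, hence $\nabla_{\mathbb S^2}s_2=-\nabla_{\mathbb S^2}s_1$), and the order-$4$ term of \eqref{extra} (the equation $\widetilde A_4=0$). Only after substituting $m_2=-\tfrac{2s_0+s_1}{s_0}m_1$ back into the order-$4$ relation and then replacing $m_1$ by its expression in $\nabla_{\mathbb S^2}s_1$ does one obtain the decoupled system $(\partial_\theta s_1)^2=(\partial_\varphi s_1)^2/\sin^2\theta$, $(\partial_\theta s_1)(\partial_\varphi s_1)=0$, which forces $\nabla_{\mathbb S^2}s_1=0$. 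Your sketch stops one order short of this and does not indicate that the \emph{fourth} radial derivative of $n$ and the \emph{second} radial derivative of $s$ (together with the first equation of \eqref{U}) are all needed.

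Your alternative idea---prove directly that $n\equiv x/|x|$ in a collar from \eqref{extra} and analyticity---does not work as stated: \eqref{extra} alone admits non-radial solutions (cf.\ Remark~\ref{1D}), and you yourself note that the boundary is characteristic for \eqref{extra}, so a uniqueness/propagation argument based solely on \eqref{extra} cannot succeed. The coupling with both equations of \eqref{U}, carried to the orders above, is what supplies the missing rigidity.
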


Lemma~\ref{lemradderiv} constitutes the heart of the proof of Theorem~\ref{thmrad}.
The proof of Lemma~\ref{lemradderiv} can be found below. We start by showing
 how Lemma~\ref{lemradderiv} implies the conclusion of Theorem~\ref{thmrad}. 
 
Let $\tilde s$ be a local solution of \eqref{sradial} with Cauchy data
\begin{equation*}
\tilde s(R)=s_0,\; \frac{d\tilde s}{dr}(R)=s_1,
\end{equation*}
where $s_1$ is the constant value of $\partial_r s$ on $\partial B_R$ according to Lemma~\ref{lemradderiv}. We fix $\eta>0$ such that $\tilde s$ is defined on $[R,R+\eta]$, and define a map $\widetilde Q$ on $B_{R+\eta}$ by
\begin{equation*}
\widetilde Q (x) = \begin{cases}
Q(x) & \text{if }|x|\leq R, \\
\tilde s(r) \left( \frac x r \otimes \frac x r- \frac 13 I\right) & \text{if }R<|x|<R+\eta.
\end{cases}
\end{equation*}
Lemma~\ref{lemradderiv} ensures that the boundary conditions on $\partial B_R$ match well at order 0 and 1:
the map $\widetilde Q$ belongs to $C^1(\overline B_{R+\eta})$. Moreover, the matching boundary conditions on $\partial B_R$ ensure that $\widetilde Q$ is a weak solution of the Euler-Lagrange equation \eqref{EL} in $B_{R+\eta}$. In particular, $\widetilde Q$ is analytic (see Section~\ref{smodseq}). Hence, for any rotation $g\in G$, the map 
\begin{equation*}
x\mapsto \widetilde Q (gx)- g \widetilde Q(x) {}^t\! g
\end{equation*}
is analytic and vanishes in $B_{R+\eta}\setminus B_R$ and must therefore vanish everywhere. We deduce that $Q$ is spherically symmetric and the proof of Theorem~\ref{thmrad} is complete.
\end{proof}

\begin{proof}[Proof of Lemma~\ref{lemradderiv}:]
During this proof we make  use of spherical coordinates $(r,\theta,\varphi)$ and denote by $(\mathbf{e_r},\mathbf{e_\theta},\mathbf{e_\varphi})$ the associated (moving) eigenframe. 

For simplicity we assume $R=1$ (the general case follows by rescaling the variable) and write $B$ for $B_1$. We proceed in three steps: we start by showing that, on the boundary $\partial B$, it holds 
\begin{itemize}
\item $\partial_r n = 0$, 
\item then $\partial_r^2n = 0$, 
\item and eventually $\partial_\theta \partial_r s = \partial_\varphi \partial_r s = 0$.
\end{itemize}

\vspace{.2em}
\textit{Step 1:} $\partial_r n=0$ on $\partial B$.

This first step is obtained as a consequence of the boundary condition \eqref{bcsn}, and of the constraint \eqref{extra}. Indeed, on the boundary, \eqref{bcsn} determines the partial derivatives of $n$ in two directions $\partial_\theta n$ and $\partial_\varphi n$, and \eqref{extra} determines the partial derivative in the remaining direction.

In spherical coordinates, \eqref{extra} becomes
\begin{equation}\label{extraspheric}
2\left( \partial_r n\otimes \partial_r n + \frac{1}{r^2}\partial_\theta n\otimes\partial_\theta n + \frac{1}{r^2\sin^2\theta}\partial_\varphi n \partial_\varphi n \right) = |\nabla n|^2 (I-n\otimes n),
\end{equation}
and 
\begin{equation*}
|\nabla n|^2 = |\partial_r n|^2 + \frac{1}{r^2}|\partial_\theta n|^2 + \frac{1}{r^2\sin^2\theta}|\partial_\varphi n|^2.
\end{equation*}
Since on the boundary $\partial B$ it holds
\begin{equation*}
n=\mathbf{e_r},\;\partial_\theta n = \mathbf{e_\theta},\; \partial_\varphi n = \sin\theta \mathbf{e_\varphi},
\end{equation*}
we deduce from \eqref{extraspheric} that
\begin{equation*}
2\partial_r n\otimes\partial_r n = |\partial_r n|^2(I-\mathbf{e_r}\otimes \mathbf{e_r})\quad\text{for }r=1,
\end{equation*}
which implies $\partial_r n=0$ and proves \textit{Step 1}.

\vspace{.2em}
\textit{Step 2:} $\partial_r^2 n=0$ on $\partial B$.

This second step is obtained as a consequence of \textit{Step 1} and of the second equation of \eqref{U}, together with the boundary conditions \eqref{bcsn}. In fact, it holds
\begin{equation*}
\Delta n = \partial_r^2 n  + \Delta_{\mathbb S^2} n = \partial_r^2 n -2\mathbf{e_r} \quad\text{for }r=1,
\end{equation*}
since $\partial_r n=0$ by \textit{Step 1} and $n=\mathbf{e_r}$ for $r=1$. Moreover, since $s$ is constant on the boundary, it holds
\begin{equation*}
(\nabla s \cdot)n = \partial_r s \partial_r n = 0\quad\text{for }r=1.
\end{equation*}
Thus the second equation of \eqref{U} becomes, on the boundary,
\begin{equation*}
s_0\partial_r^2 n -2 s_0 \mathbf{e_r} = - s_0 |\nabla n|^2 \mathbf{e_r} = -2 s_0 \mathbf{e_r} \quad\text{for }r=1.
\end{equation*}
Here we used again \eqref{bcsn} and \textit{Step 1} to compute $|\nabla n|^2$ for $r=1$. The last equation completes the proof of \textit{Step 2}.

\vspace{.2em}
\textit{Step 3:} $\partial_\theta\partial_r s = \partial_\varphi\partial_r s= 0$ on $\partial B$.

To prove this third step, we consider Taylor expansions of $s$ and $n$ with respect to $r-1\approx 0$, and plug them into \eqref{U} and \eqref{extra} to obtain more information about higher order radial derivatives and find eventually that $\partial_r s$ is constant on the boundary.

Using \textit{Step 1} and \textit{Step 2}, we may write, for $r=|x|\in [1-\delta, 1]$ and $\omega=x/r \in \mathbb S^2$,
\begin{align}
n &=\mathbf{e_r} + (r-1)^ 3 m_1(\omega) + (r-1)^4m_2(\omega) + O((r-1)^5)\label{taylorn}\\
s&=s_0+(r-1)s_1(\omega)+(r-1)^2s_2(\omega)+O((r-1)^3),\label{taylors}
\end{align}
where $6m_1=\partial_r^3 n_{|\mathbb S^2}$, $24m_2 = \partial_r^4 n_{|\mathbb S^2}$, $s_1=\partial_r s_{|\mathbb S^2}$, and $2s_2=\partial_r^2 s_{|\mathbb S^2}$ are smooth functions of $\omega\in\mathbb S^2$, and
\begin{equation*}
O((r-1)^k)=(r-1)^k \times \text{ some smooth function of }(r,\omega).
\end{equation*}

In the sequel, we plug the Taylor expansions above into \eqref{U} and \eqref{extra} in order to conclude that $s_1$ is constant. The computations are elementary but tedious. In order to clarify them, we start by sketching the main steps without going into details. The complete proof follows below.

\vspace{.2em}
\textit{Sketch of the main steps:}
Plugging \eqref{taylorn} into \eqref{extra} leads to an equation of the form
\begin{equation}\label{taylorextra}
0 = (r-1)^3 A_3 + (r-1)^4 A_4 + O((r-1)^5),
\end{equation}
where
\begin{align*}
A_3 & = A_3(m_1,\partial_\theta m_1,\partial_\varphi m_1),\\
A_4 & = A_4(m_1,m_2,\partial_\theta m_1,\partial_\varphi m_1,\partial_\theta m_2 , \partial_\varphi m_2).
\end{align*}
At this point, a first simplification occurs, since $A_4$ is actually of the form
\begin{equation*}
A_4 = -2 A_3 + \widetilde A_4(m_2 ,\partial_\theta m_2,\partial_\varphi m_2),
\end{equation*}
so that from \eqref{taylorextra} we deduce
\begin{equation}\label{tildeA4}
\widetilde A_4(m_2,\partial_\theta m_2,\partial_\varphi m_2) = 0.
\end{equation}

Next we make use of \eqref{U}. Plugging \eqref{taylorn} and \eqref{taylors} into \eqref{U}, we obtain equations of the form
\begin{equation}\label{taylorU}
\left\lbrace \begin{aligned}
0 & = \alpha_0 + O(r-1) \\
0 & = (r-1)v_1 + (r-1)^2 v_2 + O((r-1)^3,
\end{aligned}\right.
\end{equation}
where
\begin{align*}
\alpha_0 &= \alpha_0 (s_0,s_1,s_2),\\
v_1 & = v_1(s_0,m_1,\partial_\theta s_1,\partial_\varphi s_1 ) \\
v_2 & = v_2(s_0,s_1,m_1,m_2,\partial_\theta s_1,\partial_\varphi s_1,\partial_\theta s_2,\partial_\varphi s_2).
\end{align*}
The first equation in \eqref{taylorU} implies that $\alpha_0=0$. Solving $\alpha_0=0$, we obtain an expression of $s_2$ in terms of $s_1$ and $s_0$, which we plug into $v_2$. Here a new simplification arises: it holds
\begin{equation*}
v_2 = \tilde v_2 (s_0,s_1,m_1,m_2) - 3 v_1.
\end{equation*}
Thus \eqref{taylorU} implies that $v_1=\tilde v_2 = 0$. Solving $\tilde v_2 =0$ we find an expression
\begin{equation*}
m_2 = m_2 (s_0,s_1,m_1),
\end{equation*}
which we plug into \eqref{tildeA4} to obtain an equation of the form
\begin{equation*}
 A_4^\ast (s_0,s_1,m_1,\partial_\theta s_1,\partial_\varphi s_1,\partial_\theta m_1,\partial_\varphi m_1)=0.
\end{equation*}
Using the equation $A_3=0$ from \eqref{taylorextra}, we are able to simplify the last expression of $A_4^\ast$ into one which does not involve derivatives of $m_1$:
\begin{equation}\label{tildeA4again}
\widehat A_4(s_0,s_1,m_1,\partial_\theta s_1,\partial_\varphi s_1) = 0.
\end{equation}
Eventually we use the equation $v_1=0$ to express $m_1$ in terms of $s_0$, $\partial_\theta s_1$ and $\partial_\varphi s_1$. Plugging that expression of $m_1$ into \eqref{tildeA4again} leads us to a system of the form
\begin{equation*}
A_4^\sharp (s_0,\partial_\theta s_1,\partial_\varphi s_1)=0.
\end{equation*}
The above equation forces $\partial_\theta s_1=\partial_\varphi s_1 = 0$ and thus allows to conclude.

\vspace{.2em}
\textit{Complete proof:}
It holds
\begin{align*}
\partial_r n & = 3(r-1)^2 m_1 + 4(r-1)^3 m_2 + O((r-1)^4), \\
\partial_r^2 n& =  6(r-1)m_1 + 12(r-1)^2m_2+O((r-1)^3),\\
\partial_\theta n & = \mathbf{e_\theta} + (r-1)^3 \partial_\theta m_1 + (r-1)^4\partial_\theta m_2 + O((r-1)^5),\\
\partial_\varphi n &= \sin\theta \mathbf{e_\varphi} + (r-1)^3 \partial_\varphi m_1 + (r-1)^4\partial_\varphi m_2 + O((r-1)^5),\\
\Delta_{\mathbb S^2} n & = -2 \mathbf{e_r} + O((r-1)^3),
\end{align*}
and thus
\begin{align*}
\Delta n & = \partial_r^2 n + \frac{2}{r}\partial_r n + \frac{1}{r^2}\Delta_{\mathbb S^2} n \\
& = \partial_r^2 n + 2 (1+O((r-1))\partial_r n \\
& \quad  + \left(1-2(r-1)+3(r-1)^2+O((r-1)^3\right)\left(-2\mathbf{e_r}+O((r-1)^3)\right) \\
& = 6(r-1)m_1 + 12(r-1)^2 m_2 + 6 (r-1)^2 m_1 \\
& \quad -2 \mathbf{e_r} + 4(r-1) \mathbf{e_r} -6 (r-1)^2\mathbf{e_r} + O((r-1)^3) \\
&=-2\mathbf{e_r} + (r-1) \left[ 6 m_1+4\mathbf{e_r} \right] + (r-1)^2 \left[ 12 m_2 + 6m_1 -6 \mathbf{e_r}\right] + O((r-1)^3).
\end{align*}
Hence we compute
\begin{align*}
s\Delta n & = -2 s_0 \mathbf{e_r} + (r-1)\left[ 6 s_0 m_1 + 4s_0\mathbf{e_r} -2 s_1 \mathbf{e_r}\right]\\
&\quad +(r-1)^2 \left[ 12 s_0 m_2 + 6s_0 m_1 -6s_0\mathbf{e_r} +6 s_1 m_1 + 4 s_1 \mathbf{e_r} -2 s_2 \mathbf{e_r}\right] \\
& \quad + O((r-1)^3).
\end{align*}
Next we want to compute
\begin{equation*}
(\nabla s \cdot\nabla)n = \partial_r s\partial_r n + \frac{1}{r^2}\partial_\theta s\partial_\theta n + \frac{1}{r^2\sin^2\theta}\partial_\varphi s\partial_\varphi n.
\end{equation*}
We calculate each term:
\begin{align*}
\partial_r s\partial_r n& = \left(s_1 + 2(r-1)s_2+O((r-1)^2)\right)\left(3(r-1)^2m_1+O((r-1)^3)\right) \\
& = 3s_1(r-1)^2 m_1 + O((r-1)^3),\\
\frac{1}{r^2}\partial_\theta s\partial_\theta n & = 
\frac{1}{r^2}\left( (r-1)\partial_\theta s_1 + (r-1)^2\partial_\theta s_2 + O((r-1)^3)\right) \\
& \quad\times \left(\mathbf{e_\theta}+O((r-1)^3)\right) \\
& = \left(1-2(r-1)+O(r-1)\right)\\
& \quad \times \left( (r-1)\partial_\theta s_1 \mathbf{e_\theta} + (r-1)^2\partial_\theta s_2 \mathbf{e_\theta} + O((r-1)^3)\right)\\
& = (r-1) \partial_\theta s_1 \mathbf{e_\theta} \\
& \quad + (r-1)^2\left[ \partial_\theta s_2 \mathbf{e_\theta} -2 \partial_\theta s_1 \mathbf{e_\theta} \right ] + O((r-1)^3),\\
\frac{1}{r^2\sin^2\theta}\partial_\varphi s \partial_\varphi n & =
(r-1) \frac{\partial_\varphi s_1}{\sin\theta} \mathbf{e_\varphi} \\
& \quad + (r-1)^2\left[ \frac{\partial_\varphi s_2}{\sin\theta} \mathbf{e_\varphi} -2 \frac{\partial_\varphi s_1}{\sin\theta} \mathbf{e_\varphi} \right ] + O((r-1)^3).
\end{align*}

Thus it holds:
\begin{align*}
s\Delta n + 2(\nabla s\cdot\nabla)n & =
-2 s_0 \mathbf{e_r} \\
&\quad  + (r-1)\Big[ 6 s_0 m_1 +4s_0 \mathbf{e_r} -2s_1\mathbf{e_r} + 2\partial_\theta s_1 \mathbf{e_\theta} + 2\frac{\partial_\varphi s_1}{\sin\theta}\mathbf{e_\varphi}\Big] \\
& \quad + (r-1)^2\Big[
12s_0m_2 +6s_0m_1-6s_0\mathbf{e_r}+12s_1m_1+4s_1\mathbf{e_r} \\
& \qquad\qquad -2s_2\mathbf{e_r}+2\partial_\theta s_2 \mathbf{e_\theta} -4\partial_\theta s_1 \mathbf{e_\theta} + 2\frac{\partial_\varphi s_2}{\sin\theta}\mathbf{e_\varphi} -4\frac{\partial_\varphi s_1}{\sin\theta}\mathbf{e_\varphi}
\Big] \\
& \quad + O((r-1)^3).
\end{align*}

Our next step is to compute the symmetric matrix
\begin{equation*}
M = \partial_r n \otimes \partial_r n + \frac{1}{r^2}\partial_\theta n\otimes\partial_\theta n + \frac{1}{r^2\sin^2\theta}\partial_\varphi n \otimes\partial_\varphi n.
\end{equation*}

We compute each term:
\begin{align*}
\partial_r n\otimes\partial_r n & = 9 (r-1)^4 m_1\otimes m_1 + O((r-1)^5),\\
\frac{1}{r^2}\partial_\theta n\otimes\partial_\theta n & =
\left( 1 -2(r-1)+3(r-1)^2-4(r-1)^3+5(r-1)^4 + O((r-1)^5)\right) \\
& \quad \times \left( \mathbf{e_\theta}\otimes \mathbf{e_\theta} + 2(r-1)^3 \partial_\theta m_1 \odot \mathbf{e_\theta} + 2(r-1)^4 \partial_\theta m_2\odot \mathbf{e_\theta} + O((r-1)^5)\right)\\
& = \mathbf{e_\theta} \otimes \mathbf{e_\theta} -2 (r-1) \mathbf{e_\theta}\otimes  \mathbf{e_\theta} + 3 (r-1)^2 \mathbf{e_\theta}\otimes \mathbf{e_\theta} \\
& \quad + (r-1)^3 \left[ -4 \mathbf{e_\theta}\otimes \mathbf{e_\theta} + 2\partial_\theta m_1 \odot \mathbf{e_\theta} \right]\\
&\quad + (r-1)^4 \left[ 5\mathbf{e_\theta}\otimes \mathbf{e_\theta} -4 \partial_\theta m_1 \odot \mathbf{e_\theta} + 2 \partial_\theta m_2 \odot \mathbf{e_\theta} \right] \\
& \quad + O((r-1)^5),\\
\frac{1}{r^2\sin^2\theta}\partial_\varphi n \otimes \partial_\varphi n & = 
\mathbf{e_\varphi} \otimes \mathbf{e_\varphi} -2 (r-1) \mathbf{e_\varphi}\otimes  \mathbf{e_\varphi} + 3 (r-1)^2 \mathbf{e_\varphi}\otimes \mathbf{e_\varphi} \\
& \quad + (r-1)^3 \left[ -4 \mathbf{e_\varphi}\otimes \mathbf{e_\varphi} + \frac{2}{\sin\theta}\partial_\varphi m_1 \odot \mathbf{e_\varphi} \right]\\
&\quad + (r-1)^4 \left[ 5\mathbf{e_\varphi}\otimes \mathbf{e_\varphi} -\frac{4}{\sin\theta} \partial_\varphi m_1 \odot \mathbf{e_\varphi} + \frac{2}{\sin\theta} \partial_\varphi m_2 \odot \mathbf{e_\varphi} \right] \\
& \quad + O((r-1)^5).
\end{align*}

Hence we have
\begin{equation*}
M=M_0 + (r-1)M_1 +\cdots + (r-1)^4 M_4 + O((r-1)^5),
\end{equation*}
where
\begin{align*}
M_0 & = \mathbf{e_\theta}\otimes \mathbf{e_\theta} + \mathbf{e_\varphi}\otimes \mathbf{e_\varphi} = I-\mathbf{e_r} \otimes \mathbf{e_r},\\
M_1 & = -2 (I-\mathbf{e_r}\otimes \mathbf{e_r}), \\
M_2 & =  3 (I-\mathbf{e_r}\otimes \mathbf{e_r}), \\
M_3 & =  -4 (I-\mathbf{e_r}\otimes \mathbf{e_r}) + 2\partial_\theta m_1\odot \mathbf{e_\theta} + \frac{2}{\sin\theta}\partial_\varphi m_1\odot \mathbf{e_\varphi},\\
M_4 & =9 m_1 \otimes m_1 + 5 (I-\mathbf{e_r}\otimes \mathbf{e_r}) - 4\partial_\theta m_1\odot \mathbf{e_\theta} - \frac{4}{\sin\theta}\partial_\varphi m_1\odot \mathbf{e_\varphi} \\
& \quad + 2\partial_\theta m_2\odot \mathbf{e_\theta} + \frac{2}{\sin\theta}\partial_\varphi m_2\odot \mathbf{e_\varphi}.\\
\end{align*}

Using the fact that $|\nabla n|^2 =\mathrm{tr}\: M$, we obtain
in particular
\begin{align*}
|\nabla n|^2 &= 2 - 4(r-1) + 6 (r-1)^2 + (r-1)^3\Big[ -8 +
2 \partial_\theta m_1\cdot \mathbf{e_\theta}
+\frac{2}{\sin\theta}\partial_\varphi m_1\cdot \mathbf{e_\varphi} \Big]\\
&\quad +(r-1)^4\Big[9|m_1|^2+10 - 4\partial_\theta m_1\cdot \mathbf{e_\theta} -
\frac{4}{\sin\theta}\partial_\varphi m_1\cdot \mathbf{e_\varphi}+
2\partial_\theta m_2\cdot \mathbf{e_\theta} +
\frac{2}{\sin\theta}\partial_\varphi m_2\cdot \mathbf{e_\varphi} \Big]\\
&\quad +O((r-1)^5),
\end{align*}
and
\begin{align*}
|\nabla n|^2n&=2\mathbf{e_r}-4(r-1)\mathbf{e_r}+6(r-1)^2\mathbf{e_r}\\
&\quad +(r-1)^3\Big[( -8 +
2 \partial_\theta m_1\cdot \mathbf{e_\theta}
+\frac{2}{\sin\theta}\partial_\varphi m_1\cdot \mathbf{e_\varphi})\mathbf{e_r} +2m_1
\Big]\\
&\quad +(r-1)^4\Big[\big\{9|m_1|^2+10 - 4\partial_\theta m_1\cdot \mathbf{e_\theta} -
\frac{4}{\sin\theta}\partial_\varphi m_1\cdot \mathbf{e_\varphi}+
2\partial_\theta m_2\cdot \mathbf{e_\theta}\\
&\qquad\qquad\qquad +
\frac{2}{\sin\theta}\partial_\varphi m_2\cdot \mathbf{e_\varphi}\big\}\mathbf{e_r} -4m_1
+2m_2 \Big]\\
&\quad + O((r-1)^5),
\end{align*}
\begin{align*}
s|\nabla n|^2n & = 2s_0 \mathbf{e_r} + (r-1)\big[ 2s_1-4s_0\big]\mathbf{e_r}
+(r-1)^2\big[6s_0-4s_1+2s_2\big]\mathbf{e_r}\\
&\quad + O((r-1)^3),
\end{align*}
\begin{align*}
|\nabla n|^2 n\otimes n & = 2\mathbf{e_r}\otimes \mathbf{e_r}-4(r-1)\mathbf{e_r}\otimes \mathbf{e_r}+6(r-1)^2\mathbf{e_r}\otimes \mathbf{e_r}\\
&\quad +(r-1)^3\Big[( -8 +
2 \partial_\theta m_1\cdot \mathbf{e_\theta}
+\frac{2}{\sin\theta}\partial_\varphi m_1\cdot \mathbf{e_\varphi})\mathbf{e_r}\otimes
\mathbf{e_r} +4m_1\odot \mathbf{e_r}
\Big]\\
&\quad +(r-1)^4\Big[\big\{9|m_1|^2+10 - 4\partial_\theta m_1\cdot \mathbf{e_\theta} -
\frac{4}{\sin\theta}\partial_\varphi m_1\cdot \mathbf{e_\varphi}+
2\partial_\theta m_2\cdot \mathbf{e_\theta}\\
&\qquad\qquad\qquad +
\frac{2}{\sin\theta}\partial_\varphi m_2\cdot
\mathbf{e_\varphi}\big\}\mathbf{e_r}\otimes \mathbf{e_r} -8m_1\odot \mathbf{e_r}
+4m_2\odot \mathbf{e_r} \Big]\\
&\quad + O((r-1)^5),
\end{align*}
\begin{align*}
|\nabla n|^2(I-n\otimes n) & = 2(I-\mathbf{e_r}\otimes \mathbf{e_r})-4(r-1)(I-\mathbf{e_r}\otimes \mathbf{e_r})+6(r-1)^2(I-\mathbf{e_r}\otimes \mathbf{e_r})\\
&\quad +(r-1)^3\Big[( -8 +
2 \partial_\theta m_1\cdot \mathbf{e_\theta}
+\frac{2}{\sin\theta}\partial_\varphi m_1\cdot \mathbf{e_\varphi})(I-\mathbf{e_r}\otimes \mathbf{e_r}) -4m_1\odot \mathbf{e_r}
\Big]\\
&\quad +(r-1)^4\Big[\big\{9|m_1|^2+10 - 4\partial_\theta m_1\cdot \mathbf{e_\theta} -
\frac{4}{\sin\theta}\partial_\varphi m_1\cdot \mathbf{e_\varphi}+
2\partial_\theta m_2\cdot \mathbf{e_\theta}\\
&\qquad\qquad\qquad +
\frac{2}{\sin\theta}\partial_\varphi m_2\cdot
\mathbf{e_\varphi}\big\}(I-\mathbf{e_r}\otimes \mathbf{e_r}) +8m_1\odot \mathbf{e_r}
-4m_2\odot \mathbf{e_r} \Big]\\
&\quad + O((r-1)^5).
\end{align*}
Eventually, we have:
\begin{align*}
s\Delta n+2(\nabla s\cdot\nabla)n+s|\nabla n|^2n & =
 (r-1)\Big[6 s_0 m_1 + 2\partial_\theta s_1 \mathbf{e_\theta}+2\frac{\partial_\varphi s_1}{\sin\theta}\mathbf{e_\varphi} 
\Big]\\
&\quad + (r-1)^2\Big[12s_0m_2 +6s_0m_1+12s_1m_1
+2\partial_\theta s_2 \mathbf{e_\theta}\\
&\qquad\qquad\qquad -4\partial_\theta s_1 \mathbf{e_\theta} +
2\frac{\partial_\varphi s_2}{\sin\theta}\mathbf{e_\varphi}
-4\frac{\partial_\varphi s_1}{\sin\theta}\mathbf{e_\varphi} \Big]\\
&\quad +O((r-1)^3),
\end{align*}
and
\begin{equation*}
2M-|\nabla n|^2(I-n\otimes n) = (r-1)^3 A_3 + (r-1)^4A_4 + O((r-1)^5),
\end{equation*}
where
\begin{align*}
A_3 & = 4\partial_\theta m_1\odot \mathbf{e_\theta} +
\frac{4}{\sin\theta}\partial_\varphi m_1 \odot \mathbf{e_\varphi} -
\Big[ 2\partial_\theta m_1\cdot \mathbf{e_\theta} +
\frac{2}{\sin\theta}\partial_\varphi m_1 \cdot
\mathbf{e_\varphi}\Big](I-\mathbf{e_r}\otimes \mathbf{e_r})\\
&\quad +4m_1 \odot \mathbf{e_r},\\
A_4 & =18 m_1\otimes m_1 - 8\partial_\theta m_1 \odot \mathbf{e_\theta} -
\frac{8}{\sin\theta}\partial_\varphi m_1 \odot \mathbf{e_\varphi} +
4\partial_\theta m_2 \odot \mathbf{e_\theta}
+\frac{4}{\sin\theta}\partial_\varphi m_2 \odot \mathbf{e_\varphi} \\
&\quad -\Big[9 |m_1|^2 - 4\partial_\theta m_1 \cdot \mathbf{e_\theta} -
\frac{4}{\sin\theta}\partial_\varphi m_1 \cdot \mathbf{e_\varphi} +
2\partial_\theta m_2 \cdot \mathbf{e_\theta}+
\frac{2}{\sin\theta}\partial_\varphi m_2 \odot
\mathbf{e_\varphi}\Big](I-\mathbf{e_r}\otimes \mathbf{e_r})\\
& \quad -8 m_1 \odot \mathbf{e_r} + 4 m_2 \odot \mathbf{e_r} \\
& = -2 A_3 + 18 m_1\otimes m_1 + 4\partial_\theta m_2 \odot \mathbf{e_\theta}
+\frac{4}{\sin\theta}\partial_\varphi m_2 \odot \mathbf{e_\varphi} + 4 m_2 \odot \mathbf{e_r}  \\
&\quad - \Big[9 |m_1|^2 +
2\partial_\theta m_2 \cdot \mathbf{e_\theta}+
\frac{2}{\sin\theta}\partial_\varphi m_2 \odot
\mathbf{e_\varphi}\Big](I-\mathbf{e_r}\otimes \mathbf{e_r}).
\end{align*}

Moreover, denoting by
\begin{equation*}
\psi(s):=\frac 1L \left( 2s\partial_1\varphi(2s^2/3,2s^3/9)+s^2\partial_2\varphi (2s^2/3,2s^3/9) \right)
\end{equation*}
the nonlinear term of order 0 arising in the first equation of \eqref{U}, we have
\begin{equation*}
\Delta s -3s|\nabla n|^2-\psi(s) = 2s_2 +2 s_1 - 6s_0 + \psi(s_0)+O(r-1).
\end{equation*}

We conclude that the following equalities hold:
\begin{gather}
s_2=-s_1+3s_0+\frac 12 \psi(s_0),\label{*1}\\
6 s_0 m_1 + 2\partial_\theta s_1 \mathbf{e_\theta} +2\frac{\partial_\varphi s_1}{\sin\theta}\mathbf{e_\varphi}=0, \label{*2a}\\
12s_0m_2 +6s_0m_1+12 s_1m_1
+2\partial_\theta s_2 \mathbf{e_\theta}
-4\partial_\theta s_1 \mathbf{e_\theta} +
2\frac{\partial_\varphi s_2}{\sin\theta}\mathbf{e_\varphi}
-4\frac{\partial_\varphi s_1}{\sin\theta}\mathbf{e_\varphi} = 0, \label{*2b}\\
\begin{split}
4\partial_\theta m_1\odot \mathbf{e_\theta} & +
\frac{4}{\sin\theta}\partial_\varphi m_1 \odot \mathbf{e_\varphi}  +4m_1 \odot \mathbf{e_r} = \\
& \Big[ 2\partial_\theta m_1\cdot \mathbf{e_\theta} +
\frac{2}{\sin\theta}\partial_\varphi m_1 \cdot
\mathbf{e_\varphi}\Big](I-\mathbf{e_r}\otimes \mathbf{e_r}),
\end{split}\label{Ca} \\
\begin{split}
& 18 m_1\otimes m_1  +
4\partial_\theta m_2 \odot \mathbf{e_\theta}
+\frac{4}{\sin\theta}\partial_\varphi m_2 \odot \mathbf{e_\varphi} 
 + 4 m_2 \odot \mathbf{e_r}\\
& \quad = \Big[9 |m_1|^2  +
2\partial_\theta m_2 \cdot \mathbf{e_\theta}+
\frac{2}{\sin\theta}\partial_\varphi m_2 \odot
\mathbf{e_\varphi}\Big](I-\mathbf{e_r}\otimes \mathbf{e_r}).
\end{split}\label{Cb}
\end{gather}
Since $\psi(s_0)$ is a constant, \eqref{*1} implies that
\begin{equation*}
\partial_\theta s_2 = -\partial_\theta s_1,\quad\partial_\varphi s_2 = - \partial_\varphi s_1,
\end{equation*}
so that  \eqref{*2b} becomes
\begin{equation*}
12s_0m_2 +6s_0m_1+12 s_1m_1
=6\partial_\theta s_1 \mathbf{e_\theta} + 6\frac{\partial_\varphi s_1}{\sin\theta}\mathbf{e_\varphi}
\end{equation*}
that is, using \eqref{*2a},
\begin{equation*}
12s_0m_2 + 24 s_0 m_1 + 12 s_1 m_1 = 0
\end{equation*}
from which we deduce an expression of $m_2$ in terms of $s_0$, $s_1$ and $m_1$:
\begin{equation}\label{*2b'}
m_2 = -\frac{2s_0+s_1}{s_0}m_1.
\end{equation}
Thus we compute, using also \eqref{Ca},
\begin{align*}
4\partial_\theta m_2 \odot \mathbf{e_\theta} + \frac{4}{\sin\theta}\partial_\varphi m_2 \odot \mathbf{e_\varphi} & =-\frac{2s_0+s_1}{s_0}\left( 4\partial_\theta m_1 \odot \mathbf{e_\theta} + \frac{4}{\sin\theta}\partial_\varphi m_1 \odot \mathbf{e_\varphi} \right) \\
& \quad - \frac{1}{s_0} \left(\partial_\theta s_1 m_1 \odot \mathbf{e_\theta}+\frac{1}{\sin\theta}\partial_\varphi s_1 m_1\odot \mathbf{e_\varphi}\right)\\
& = -\frac{2s_0+s_1}{s_0}\big[ 2\partial_\theta m_1\cdot \mathbf{e_\theta} +
\frac{2}{\sin\theta}\partial_\varphi m_1 \cdot
\mathbf{e_\varphi}\big](I-\mathbf{e_r}\otimes \mathbf{e_r})\\
& \quad +4 \frac{2s_0+s_1}{s_0} m_1\odot \mathbf{e_r} \\
& \quad - \frac{1}{s_0} \left(\partial_\theta s_1 m_1 \odot \mathbf{e_\theta}+\frac{1}{\sin\theta}\partial_\varphi s_1 m_1\odot \mathbf{e_\varphi}\right)\\
& = \big[ 2\partial_\theta m_2 \cdot \mathbf{e_\theta} + \frac{2}{\sin\theta}\partial_\varphi m_2 \cdot \mathbf{e_\varphi} \big] (I-\mathbf{e_r}\otimes \mathbf{e_r})\\
&\quad +\frac{1}{2s_0}\big[ \partial_\theta s_1 m_1 \cdot \mathbf{e_\theta}+\frac{1}{\sin\theta}\partial_\varphi s_1 m_1\cdot \mathbf{e_\varphi}\big](I-\mathbf{e_r} \otimes \mathbf{e_r})\\
& \quad -4m_2\odot \mathbf{e_r} \\
& \quad - \frac{1}{s_0} \left(\partial_\theta s_1 m_1 \odot \mathbf{e_\theta}+\frac{1}{\sin\theta}\partial_\varphi s_1 m_1\odot \mathbf{e_\varphi}\right).
\end{align*}
We plug this last computation into \eqref{Cb}, which gives:
\begin{equation}\label{Cab'}
\begin{split}
& 18 m_1 \otimes m_1 - 9 |m_1|^2 (I-\mathbf{e_r}\otimes \mathbf{e_r})= \\
& \qquad \frac 1 {s_0} \left(\partial_\theta s_1 m_1 \odot \mathbf{e_\theta}+\frac{1}{\sin\theta}\partial_\varphi s_1 m_1\odot \mathbf{e_\varphi}\right) \\
& \qquad  -\frac 1 {2s_0} \big[ \partial_\theta s_1 m_1 \cdot \mathbf{e_\theta}+\frac{1}{\sin\theta}\partial_\varphi s_1 m_1\cdot \mathbf{e_\varphi}\big](I-\mathbf{e_r} \otimes \mathbf{e_r}).
\end{split}
\end{equation}
The identity \eqref{Cab'} is an equality of symmetric (traceless) matrices, so it amounts to  5 scalar equalities. Actually only two of them are interesting (see Remark~\ref{remextra}). In the sequel we are going to make use of \eqref{Cab'} applied -- as an equality of bilinear forms -- to $(\mathbf{e_\theta},\mathbf{e_\theta})$ and $(\mathbf{e_\theta},\mathbf{e_\varphi})$, which gives the two following equations:
\begin{equation}
\label{Cab}
\begin{split}
18 (m_1\cdot \mathbf{e_\theta} )^2 - 9 |m_1|^2 &  = \frac 1 {2s_0} \partial_\theta s_1 m_1\cdot \mathbf{e_\theta} - \frac 1 {2 s_0\sin\theta} \partial_\varphi s_1 m_1 \cdot \mathbf{e_\varphi} \\
18 (m_1\cdot \mathbf{e_\theta})(m_1\cdot \mathbf{e_\varphi}) & = \frac 1 {2 s_0} \partial_\theta s_1 m_1\cdot \mathbf{e_\varphi} +\frac 1 {2s_0\sin\theta}\partial_\varphi s_1 m_1 \cdot \mathbf{e_\theta}
\end{split}
\end{equation}
Eventually we make use of \eqref{*2a} to transform \eqref{Cab'} into equations involving only the derivatives of $s_1$. 

Equation \eqref{*2a} may indeed be rewritten as
\begin{equation*}
m_1 = -\frac{1}{3s_0} \partial_\theta s_1 \mathbf{e_\theta} - \frac{1}{3s_0\sin\theta} \partial_\varphi s_1 \mathbf{e_\varphi}.
\end{equation*}
Thus we have the following identities:
\begin{gather*}
m_1\cdot \mathbf{e_\theta}  = -\frac{1}{3s_0}\partial_\theta s_1, \quad
m_1\cdot \mathbf{e_\varphi}  = -\frac{1}{3s_0\sin\theta}\partial_\varphi s_1, \\
|m_1|^2  = \frac{1}{9 s_0^2}\left( (\partial_\theta s_1)^2 + \frac{(\partial_\varphi s_1)^2}{\sin^2\theta}\right)
\end{gather*}
which we plug into \eqref{Cab'} to obtain:
\begin{gather*}
\frac{2}{s_0^2}(\partial_\theta s_1)^2-\frac{1}{s_0^2}\left( (\partial_\theta s_1)^2 + \frac{(\partial_\varphi s_1)^2}{\sin^2\theta}\right) 
= -\frac 1 {6 s_0^2} (\partial_\theta s_1)^2 + \frac 1 {6 s_0^2\sin^2\theta} (\partial_\varphi s_1)^2\\
\frac{2}{s_0^2\sin\theta}(\partial_\theta s_1)(\partial_\varphi s_1) =
-\frac {1} {3s_0^2\sin \theta}(\partial_\theta s_1)(\partial_\varphi s_1)
\end{gather*}
i.e.
\begin{gather*}
 (\partial_\theta s_1)^2 - \frac{1}{\sin^2\theta}(\partial_\varphi s_1)^2 = 0 \\
\frac{1}{\sin\theta}(\partial_\theta s_1)(\partial_\varphi s_1) = 0
\end{gather*}
 Clearly, the last equations imply that
\begin{equation*}
\partial_\theta s_1 = \partial_\varphi s_1 = 0,
\end{equation*}
which proves \textit{Step 3}.
\end{proof}

\section{Conclusions and perspectives}

\subsection{Conclusions}

We have studied nematic equilibrium configurations under the constraint of uniaxial symmetry. The results we have obtained show that the constraint of uniaxial symmetry is very restrictive and should in general not be satisfied by equilibrium configurations, except in the presence of other strong symmetries. 

We have shown that, for a nematic equilibrium configuration presenting translational invariance in one direction, there are only two options: either it does not have any regions with uniaxial symmetry, or it has uniform director field. In particular, when the boundary conditions prevent the director field from being uniform, as it is the case in hybrid cells or in capillaries with radial anchoring, then at equilibrium uniaxial order is destroyed spontaneously within the whole system. In other words, for translationally invariant configurations, biaxial escape has to occur.

Biaxiality had in fact been predicted in such geometries \cite{sonnetkilianhess95, palffymuhoraygartlandkelly94, bisigartlandrossovirga03}, but it was supposed to stay confined to small regions, and to occur only in some parameter range. Here we have provided a rigorous proof that biaxiality must occur everywhere, and for any values of the parameter: the configurations interpreted as uniaxial just correspond to a small degree of biaxiality. Our proof does not rely on free energy minimization, but only on the equilibrium equations -- in particular it affects all metastable configurations.  It is also remarkable that our results do not depend on the form of the bulk energy density, whereas all the previously cited workers used a four-terms approximation.

For general three-dimensional configurations we have not obtained a complete description of uniaxial equilibrium configurations, but we have studied the model case of the hedgehog defect, and obtained a strong symmetry result: a uniaxial equilibrium must be spherically symmetric. We believe in fact that, in general, the only non trivial uniaxial solutions of the equilibrium equation are spherically symmetric.

\subsection{Perspectives}

Many interesting problems concerning uniaxial equilibrium and biaxial escape remain open. We mention here three directions of further research. 

The first one is the complete description of three-dimensional uniaxial solutions of \eqref{EL}. Techniques similar to the proof of Theorem~\ref{thmrad} should allow to prove that, in a smooth bounded domain with normal anchoring, uniaxial solutions exist only if the domain has spherical symmetry. Such a result would constitute a first step towards the conjectured fact that the only non trivial uniaxial solution of \eqref{EL} -- whatever the form of the domain and the boundary conditions -- are spherically symmetric. For more general boundary conditions however, other techniques would likely be needed.

Another open problem is to consider more general (and more physically relevant) elastic terms (see Remark~\ref{remframeinv}). The equation \eqref{extra} corresponding to equilibrium with respect to symmetry-breaking perturbations is more complicated in that case (in particular it is of second order).

A third problem, which is of even greater physical relevance, is to investigate ``approximately uniaxial'' equilibrium configurations. Hopefully, equation \eqref{extra} could play an interesting role in such a study.

\bibliographystyle{plain}
\bibliography{uniax}

\end{document}